\documentclass{amsart}

\usepackage{sty/preamble}

\newcommand{\ack}{\subsection*{Acknowledgment}}


\title{Multicategories Model All Connective Spectra}

\authorinfoNJDY

\hypersetup{pdfauthor=\authors}

\date{18 March 2022}

%
%
%
%
%
%
\subjclass[2020]{Primary: 18M65; Secondary: 55P42, 18M05}

\begin{document}

\begin{abstract}
  There is a free construction from multicategories to permutative categories, left adjoint to the endomorphism multicategory construction.
The main result shows that these functors induce an equivalence of homotopy theories.
This result extends a similar result of Thomason, that permutative categories model all connective spectra.

\end{abstract}

\maketitle

\tableofcontents
\section{Introduction}\label{sec:intro}

Thomason proved in \cite{thomason} that symmetric monoidal categories model all connective spectra, in the sense that there is an equivalence of homotopy categories between that of $\SMC$ and of $\Sp_{\ge 0}$ with respect to stable equivalences.  See \cite{mandell_inverseK} and \cite{gjo1} for further refinements of this result to an equivalence of homotopy \emph{theories}
\begin{equation}\label{eq:perm-spge0-hty-thy}
\big( \PermCat, \cS \big) \sim
\big( \Sp_{\ge 0}, \cS \big)
\end{equation}
in the sense of \cref{definition:hty-thy} below.  In \eqref{eq:perm-spge0-hty-thy},
\begin{itemize}
\item $\PermCat$ denotes the category of small permutative categories and strict monoidal functors,
\item $\Sp_{\ge 0}$ denotes the category of connective symmetric spectra, and
\item $\cS$ denotes the class of stable equivalences.
\end{itemize}
The equivalence is induced by Segal's $K$-theory construction \cite{segal}
\[
K\cn \PermCat \to \Sp_{\ge 0}.
\]
We give further explanations of the relevant background material in \cref{sec:permcat,sec:multicat,sec:multicats-model}. 

In this article we show that there is an equivalence of homotopy theories
\begin{equation}\label{eq:multicat-perm-hty-thy}
\big( \Multicat, \cS \big) \sim
\big( \PermCat, \cS \big).
\end{equation}
The equivalence is induced by a free functor
\[
F \cn \Multicat \to \PermCat
\]
that we describe in \cref{sec:free-perm,sec:free-perm-adj} below.
We prove the homotopy equivalence \eqref{eq:multicat-perm-hty-thy} in \cref{theorem:F-End-hthy-equiv}.
Combining \eqref{eq:multicat-perm-hty-thy} with \eqref{eq:perm-spge0-hty-thy} above gives the following main result.
\begin{theorem}\label{theorem:multicat-perm-hty-thy}
  There is an equivalence of homotopy theories
  \[
  \big( \Multicat, \cS \big) \sim
  \big( \Sp_{\ge 0}, \cS \big)
  \]
  induced by
  \[
  \Multicat \fto{F} \PermCat \fto{K} \Sp_{\ge 0}.
  \]
\end{theorem}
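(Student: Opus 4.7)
The plan is to deduce this theorem as an immediate formal consequence of the two equivalences of homotopy theories already displayed in the introduction. In the notion recalled in \cref{definition:hty-thy}, equivalences of homotopy theories are closed under composition; so once both factors are in hand, the argument is purely formal and carries no independent obstacle.

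First, I would invoke the equivalence \eqref{eq:perm-spge0-hty-thy}, due to Mandell \cite{mandell_inverseK} and to \cite{gjo1}, which is induced by Segal's $K$-theory construction $K \cn \PermCat \to \Sp_{\ge 0}$. This supplies the second factor of the desired composite. Second, I would invoke the equivalence \eqref{eq:multicat-perm-hty-thy}, which is \cref{theorem:F-End-hthy-equiv}---the main technical result of the present paper---induced by the free functor $F \cn \Multicat \to \PermCat$. Since $F$ and $K$ each induce an equivalence of homotopy theories, both preserve and reflect the class $\cS$ of stable equivalences; consequently the composite $K \circ F$ sends $\cS$ into $\cS$ and descends to the homotopy-theoretic data.

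Composing the two equivalences then yields an equivalence $(\Multicat, \cS) \sim (\Sp_{\ge 0}, \cS)$ induced by $K \circ F$, as required. All of the genuine work lies upstream, inside \cref{theorem:F-End-hthy-equiv} and the cited proofs of \eqref{eq:perm-spge0-hty-thy}; granting those, the present statement is a one-line corollary.
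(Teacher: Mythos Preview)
Your proposal is correct and matches the paper's approach exactly: the paper presents this theorem as an immediate consequence of combining \eqref{eq:perm-spge0-hty-thy} with \eqref{eq:multicat-perm-hty-thy} (i.e., \cref{theorem:F-End-hthy-equiv}), with no separate proof given. One small remark: you do not need that $F$ and $K$ \emph{reflect} stable equivalences to conclude that $K \circ F$ is a relative functor; it suffices that each preserves $\cS$, which holds by the very definition of stable equivalences in \cref{definition:she}.
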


\subsection*{Motivation}
Multicategories, as a context for stable homotopy, have several advantages related to symmetric monoidal closed structure known as the Boardman-Vogt tensor product \cite{boardman-vogt}.
This monoidal product does not restrict to $\PermCat$, but it does provide $\PermCat$ with the structure of a multicategory.
This perspective appears in work of Elmendorf-Mandell \cite{elmendorf-mandell,elmendorf-mandell-perm}, which goes on to develop a multifunctorial $K$-theory extending that of Segal.
A detailed explanation of the relevant theory is given in \cite[Part~2]{cerberusIII}.  In particular, see \cite[10.3.32 and 10.8]{cerberusIII}.

The category of small Waldhausen categories, useful in the study of algebraic $K$-theory, also admits the structure of a closed multicategory that is not symmetric monoidal.  See Zakharevich \cite{zakharevich} for a proof and further explanation.  Work of Bohmann-Osorno \cite{bohmann_osorno} develops a multifunctorial $K$-theory for Waldhausen categories similar to that of Elmendorf-Mandell.

The conclusion of \cref{theorem:multicat-perm-hty-thy} shows that, although richer in algebraic structure, multicategories model the same homotopy theory as that of permutative categories, namely, connective stable homotopy theory.  Thus, while the category of small multicategories provides a context for multiplicative homotopy theory, it does not introduce exotic or spurious homotopy types.

\subsection*{Outline}
\Cref{sec:hty-thy,sec:permcat,sec:multicat} fix notation and terminology by giving relevant background for complete Segal spaces, permutative categories, and multicategories.  In \cref{sec:free-perm} we define the free construction $F$, together with unit and counit natural transformations.  In 
\cref{sec:free-perm-adj} we show that these define an adjunction of 2-categories.  \Cref{sec:multicats-model} contains the definitions of stable equivalences and, in \cref{theorem:F-End-hthy-equiv}, the proof that $F$ is an equivalence of homotopy theories.

\ack
We thank the referee for their careful reading of this paper, leading to a number of improvements.

\section{Equivalences of Homotopy Theories}\label{sec:hty-thy}

In this section we review the theory of complete Segal spaces due to Rezk \cite{rezk-homotopy-theory}.
An equivalence of homotopy theories (\cref{definition:hty-thy}) is an equivalence of fibrant replacements in the complete Segal space model structure.
For further context and development we refer the reader to \cite{dwyer-kan,hirschhorn,toen-axiomatisation,barwick-kan}.

\subsection*{Complete Segal Spaces}

\begin{definition}\label{definition:css}
  A bisimplicial set $X$ is a \emph{complete Segal Space} if
  \begin{itemize}
  \item it is fibrant in the Reedy model structure on bisimplicial sets,
  \item for each $n \ge 2$ the Segal map
    \[
    X(n) \to X(1) \times_{X(0)} \cdots \times_{X(0)} X(1)
    \]
    is a weak equivalence of simplicial sets, and
  \item the morphism
    \begin{equation}\label{eq:css-char}
    X(0) \iso \Map(\De[0],X) \to \Map(E,X)
    \end{equation}
    is a weak equivalence of simplicial sets, where $E$ is the discrete nerve of the category consisting of two isomorphic objects and \eqref{eq:css-char} is induced by the unique morphism $E \to \De[0]$. \dqed
  \end{itemize}
\end{definition}

\begin{remark}
  The definition of complete Segal space given above is equivalent to that given in \cite[Section~6]{rezk-homotopy-theory} by \cite[6.4]{rezk-homotopy-theory}.
\end{remark}

\begin{theorem}[{\cite[7.2]{rezk-homotopy-theory}}]\label{theorem:css-fibrant}
  There is a simplicial closed model structure on the category of bisimplicial sets, called the \emph{complete Segal space model structure}, that is given as a left Bousfield localization of the Reedy model structure and in which the fibrant objects are precisely the complete Segal spaces.
\end{theorem}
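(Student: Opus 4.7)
The plan is to construct the model structure as a left Bousfield localization of the Reedy model structure on bisimplicial sets, and then identify its fibrant objects with the complete Segal spaces of \cref{definition:css}.

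First I would take the Reedy model structure on bisimplicial sets, in which weak equivalences are levelwise weak equivalences of simplicial sets. This structure is left proper, combinatorial, and simplicial with every object cofibrant, so it admits left Bousfield localizations at an arbitrary set of maps by a general existence theorem such as the one in \cite{hirschhorn}. The localized structure automatically inherits the simplicial enrichment, left properness, and cofibrant generation, with the same class of cofibrations as the Reedy structure.

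Next I would choose the localizing set $S$, whose elements are regarded as maps of bisimplicial sets via the standard embedding of simplicial sets. The set $S$ consists of two families: for each $n \ge 2$, the spine inclusion $G(n) = \De[1] \cup_{\De[0]} \cdots \cup_{\De[0]} \De[1] \hookrightarrow \De[n]$; and the map $E \to \De[0]$. The fibrant objects of the Bousfield localization at $S$ are, by construction, the Reedy fibrant $X$ for which $\Map(B, X) \to \Map(A, X)$ is a weak equivalence of simplicial sets for each $(A \to B) \in S$. For Reedy fibrant $X$, the mapping space $\Map(\De[n], X)$ is canonically $X(n)$, while $\Map(G(n), X)$ is identified with the strict iterated pullback $X(1) \times_{X(0)} \cdots \times_{X(0)} X(1)$, which by Reedy fibrancy computes the corresponding homotopy limit. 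Hence locality with respect to the spine inclusions is exactly the Segal condition of \cref{definition:css}, and locality with respect to $E \to \De[0]$ is exactly the completeness condition \eqref{eq:css-char}; the fibrant objects are precisely the complete Segal spaces.

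The main obstacle is the input step: one must verify left properness of the Reedy model structure, exhibit explicit generating (trivial) cofibrations and small object arguments for combinatoriality, and confirm that the simplicial enrichment descends to the localization. Once these technicalities are handled, the identification of the fibrant objects with complete Segal spaces is a direct unwinding of the definition of $S$-local object together with standard mapping-space calculations for Reedy fibrant simplicial spaces.
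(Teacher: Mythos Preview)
The paper does not give its own proof of this statement: it is recorded as \cite[7.2]{rezk-homotopy-theory} and used as a black box from the literature. There is therefore nothing in the paper to compare your argument against beyond the citation itself.

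Your outline is a faithful sketch of Rezk's construction and is essentially correct. A couple of small remarks if you intend to flesh it out. First, Rezk works with cellularity rather than combinatoriality when invoking the localization machinery; either hypothesis suffices here since bisimplicial sets form a presheaf category, but if you cite \cite{hirschhorn} you should match the hypotheses actually used there. Second, Rezk's original presentation proceeds in two stages, first localizing to obtain the Segal space model structure and then further localizing at the completeness map, whereas you localize at the union $S$ in one step; the two procedures yield the same model structure, but if you are trying to reproduce the cited theorem you may want to mirror that organization. Finally, the identification $\Map(G(n),X) \simeq X(1)\times_{X(0)}\cdots\times_{X(0)}X(1)$ for Reedy fibrant $X$ is correct but deserves a sentence of justification, since it uses that the spine is a colimit of representables and that Reedy fibrancy makes the relevant pullbacks homotopy pullbacks.
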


\subsection*{Relative Categories}

\begin{definition}\label{definition:rel-cat}
  A \emph{relative category} is a pair $(\C,\cW)$ consisting of a category $\C$ and a subcategory $\cW$ containing all of the objects of $\C$.
  A \emph{relative functor}
  \[
  F\cn (\C,\cW) \to (\C',\cW')
  \]
  is a functor from $\C$ to  $\C'$ that sends morphisms of $\cW$ to those of $\cW'$.
\end{definition}

\begin{definition}\label{definition:rel-cat-pow}
  Suppose $(\C,\cW)$ is a relative category and $\A$ is another category.
  We let
  \[
  (\C,\cW)^\A
  \]
  denote the subcategory of $\C^\A$ whose objects are functors $\A \to \C$ and whose morphisms are those natural transformations with components in $\cW$.
\end{definition}

\begin{definition}\label{definition:classification-diagram}
  Suppose $(\C,\cW)$ is a relative category.
  The \emph{classification diagram} of $(\C,\cW)$ is the bisimplicial set
  \[
  \Ncl(\C,\cW) = \Ner\big((\C,\cW)^{\De[\bdot]}\big)
  \]
  given by
  \[
  n \mapsto \Ner\big((\C,\cW)^{\De[n]}\big)
  \]
  where $\De[n]$ denotes the category consisting of $n$ composable arrows.
\end{definition}

\begin{definition}\label{definition:hty-thy}
  Suppose $(\C,\cW)$ is a relative category.  We say that a bisimplicial set $R\Ncl(\C,\cW)$ is a \emph{homotopy theory of $(\C,\cW)$} if it is a fibrant replacement of $\Ncl(\C,\cW)$ in the complete Segal space model structure.
  We say that a relative functor
  \[
  F \cn (\C,\cW) \to (\C',\cW')
  \]
  is an \emph{equivalence of homotopy theories} if the induced morphism $R\Ncl F$ between homotopy theories is a weak equivalence in the complete Segal space model structure.
\end{definition}

\begin{remark}
  For readers familiar with the notions of hammock localization and $DK$-equivalence \cite{dwyer-kan}, Barwick and Kan have shown in \cite[1.8]{barwick-kan} that a relative functor
  \[
  F \cn (\C,\cW) \to (\C',\cW')
  \]
  is an equivalence of homotopy theories if and only if it induces a $DK$-equivalence between hammock localizations.
  In that case, $F$ induces equivalences between mapping simplicial sets and between categories of components.
  In particular, if $F$ is an equivalence of homotopy theories then the induced functor between categorical localizations
  \[
  \C[\cW^\inv] \to \C'[(\cW')^\inv]
  \]
  is an equivalence.
\end{remark}

\begin{proposition}[{\cite[2.8]{gjo1}}]\label{gjo28}
  Suppose
  \[
  F \cn (\C,\cW) \to (\C',\cW')
  \]
  is a relative functor and suppose that $F$ induces a weak equivalence of simplicial sets
  \begin{equation}\label{eq:F-level-we}
  \Ner\big((\C,\cW)^{\De[n]}\big) \to \Ner\big((\C',\cW')^{\De[n]}\big)
  \end{equation}
  for each $n$.  Then $F$ is an equivalence of homotopy theories.
\end{proposition}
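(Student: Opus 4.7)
The plan is to reduce the claim to the fact that a levelwise weak equivalence between bisimplicial sets is a weak equivalence in the complete Segal space model structure, and then invoke two-out-of-three on the fibrant replacement square.

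First I would unpack the classification diagram: by \cref{definition:classification-diagram}, the bisimplicial set $\Ncl(\C,\cW)$ has, in horizontal degree $n$, the simplicial set $\Ner((\C,\cW)^{\De[n]})$. The hypothesis \eqref{eq:F-level-we} therefore states exactly that $\Ncl F \cn \Ncl(\C,\cW) \to \Ncl(\C',\cW')$ is a levelwise weak equivalence of bisimplicial sets, i.e.\ a Reedy weak equivalence.

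Next I would promote this to a complete Segal space equivalence. By \cref{theorem:css-fibrant}, the complete Segal space model structure is a left Bousfield localization of the Reedy model structure, so every Reedy weak equivalence is a weak equivalence in the complete Segal space model structure. Hence $\Ncl F$ is itself a weak equivalence between (not necessarily fibrant) bisimplicial sets in the complete Segal space model structure.

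Finally I would pass to fibrant replacements. Choose fibrant replacements $\Ncl(\C,\cW) \to R\Ncl(\C,\cW)$ and $\Ncl(\C',\cW') \to R\Ncl(\C',\cW')$ in the complete Segal space model structure as in \cref{definition:hty-thy}. The induced map $R\Ncl F$ fits into a commutative square with $\Ncl F$ in which the two horizontal fibrant replacement maps and $\Ncl F$ are all weak equivalences in the complete Segal space model structure; two-out-of-three then forces $R\Ncl F$ to be a weak equivalence as well, which by \cref{definition:hty-thy} is precisely the assertion that $F$ is an equivalence of homotopy theories.

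There is no real obstacle here once one notices that $\Ncl(\C,\cW)_n = \Ner((\C,\cW)^{\De[n]})$; the entire argument is formal from the fact that the complete Segal space model structure is a Bousfield localization of the Reedy structure, together with the levelwise characterization of Reedy weak equivalences for bisimplicial sets.
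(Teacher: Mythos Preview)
Your proof is correct and follows essentially the same route as the paper: identify the hypothesis as a levelwise (hence Reedy) weak equivalence on classification diagrams, use that the complete Segal space model structure is a localization of the Reedy structure, and then pass to fibrant replacements. The only difference is that you spell out the two-out-of-three step for fibrant replacements explicitly, whereas the paper leaves this implicit.
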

\begin{proof}
  The assumption that \eqref{eq:F-level-we} is a weak equivalence for each $n$ means that
  \[
  \Ncl F\cn \Ncl(\C,\cW) \to \Ncl(\C',\cW')
  \]
  is a weak equivalence between classification diagrams in the Reedy model structure \cite[Section~2.4]{rezk-homotopy-theory}.
  Thus $\Ncl F$ is a weak equivalence in the complete Segal space model structure because it is a localization of the Reedy model structure.
  As a consequence, $\Ncl F$ induces a weak equivalence between the homotopy theories given by fibrant replacements.
\end{proof}

The following application of \cref{gjo28} is a special case of \cite[2.9]{gjo1} that is suitable for our purposes.
\begin{proposition}\label{gjo29}
  Suppose given relative functors
  \[
  F \cn (\C, \cW) \lradj (\C',\cW') \cn E
  \]
  such that
  \begin{itemize}
  \item $F$ is left adjoint to $E$ and
  \item each component of the unit, respectively counit, for $F \dashv E$ is a morphism in $\cW$, respectively $\cW'$.
  \end{itemize}
  Then $F$ and $E$ are equivalences of homotopy theories.
\end{proposition}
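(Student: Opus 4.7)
The plan is to verify the hypothesis of \cref{gjo28} for both $F$ and $E$, i.e., to show that the induced maps of nerves
\[
\Ner\big((\C,\cW)^{\De[n]}\big) \to \Ner\big((\C',\cW')^{\De[n]}\big)
\qquad\text{and}\qquad
\Ner\big((\C',\cW')^{\De[n]}\big) \to \Ner\big((\C,\cW)^{\De[n]}\big)
\]
are weak equivalences for each $n \ge 0$. The strategy is to lift the adjunction $F \dashv E$ to an adjunction between these functor categories and then invoke the standard fact that nerves of adjoint functors are homotopy inverses.

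First I would recall that any adjunction $F \dashv E$ between categories induces, for each small category $\A$, an adjunction
\[
F_* \cn \C^\A \lradj (\C')^\A \cn E_*
\]
by postcomposition, with unit and counit given by whiskering $\eta$ and $\epsilon$ with $\A$-shaped diagrams. Taking $\A = \De[n]$, I would then check that $F_*$ and $E_*$ restrict to relative functors between $(\C,\cW)^{\De[n]}$ and $(\C',\cW')^{\De[n]}$: a natural transformation with components in $\cW$ is sent, levelwise, to one with components in $\cW'$ because $F$ is a relative functor, and symmetrically for $E$. Moreover, for any diagram $G\cn \De[n] \to \C$, the whiskered unit $\eta G$ has components $\eta_{G(i)}$, each of which lies in $\cW$ by hypothesis; hence $\eta G$ is a morphism in $(\C,\cW)^{\De[n]}$. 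The analogous statement holds for the counit. Consequently $F_* \dashv E_*$ restricts to an honest adjunction between the relative-functor subcategories.

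Now I would apply the classical fact, going back to Quillen and Segal, that an adjunction of categories induces a homotopy equivalence on nerves: the unit and counit are natural transformations, hence give simplicial homotopies between $\Ner(E_* F_*)$ and the identity on $\Ner((\C,\cW)^{\De[n]})$, and between $\Ner(F_* E_*)$ and the identity on $\Ner((\C',\cW')^{\De[n]})$. This shows that $\Ner F_*$ and $\Ner E_*$ are mutually inverse homotopy equivalences of simplicial sets, for each $n$.

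Finally, both hypotheses of \cref{gjo28} are satisfied for $F$ and for $E$, so both are equivalences of homotopy theories. The only real step requiring attention is the verification that the adjunction restricts to the subcategories of levelwise-weak-equivalence natural transformations; once this is established, the rest is formal.
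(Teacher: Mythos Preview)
Your proof is correct and follows essentially the same argument as the paper: lift the adjunction to the diagram categories $(\C,\cW)^{\De[n]}$ and $(\C',\cW')^{\De[n]}$, observe that the unit and counit restrict because their components lie in $\cW$ and $\cW'$, and then use that natural transformations induce simplicial homotopies on nerves to invoke \cref{gjo28}. The paper's proof is slightly terser but structurally identical.
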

\begin{proof}
  The adjunction $F \dashv E$ induces an adjunction
  \[
  F^{\De[n]} \cn \C^{\De[n]} \lradj \C'^{\De[n]} \cn E^{\De[n]}
  \]
  for each $n$.
  Since the components of the unit and counit are morphisms in $\cW$ and $\cW'$, respectively, they continue to induce an adjunction when $F$ and $E$ are restricted to the subcategories $(\C,\cW)^{\De[n]}$ and $(\C',\cW')^{\De[n]}$.
  A natural transformation between functors induces a simplicial homotopy on nerves, and hence the result follows from \cref{gjo28}.
\end{proof}

\section{Permutative Categories}\label{sec:permcat}

In this section we define permutative categories, strict monoidal functors, and monoidal natural transformations.  See \cite{joyal-street,maclane,johnson-yau,cerberusI,cerberusII} for further discussion in the more general context of plain/braided/symmetric monoidal structure.

\begin{definition}\label{definition:permutativecategory}
  A \emph{permutative category} $(\C,\oplus,e,\xi)$ consists of
  \begin{itemize}
  \item a category $\C$,
  \item a functor $\oplus \cn \C \times \C \to \C$, called the \emph{monoidal sum},
  \item an object $e \in \C$, called the \emph{monoidal unit}, and
  \item a natural isomorphism $\xi$ called the \emph{symmetry isomorphism} with components
    \[
    \xi_{X,Y} \cn X \oplus Y \to Y \oplus X
    \]
    for objects $X$ and $Y$ of $\C$.
  \end{itemize}
  The monoidal sum is required to be associative and unital, with $e$ as its unit.
  The symmetry isomorphism $\xi$ is required to make the following symmetry and hexagon diagrams commute for objects $X,Y,Z \in \C$. 
  \begin{equation}\label{symmoncatsymhexagon}
    \begin{tikzpicture}[xscale=3,yscale=1.1,vcenter]
      \tikzset{0cell/.append style={nodes={scale=.85}}}
      \tikzset{1cell/.append style={nodes={scale=.85}}}
      \def\h{.3}
      \draw[0cell] 
      (0,0) node (a) {X \oplus Y}
      (a)++(.8,0) node (c) {X \oplus Y}
      (a)++(.4,-1) node (b) {Y \oplus X}
      ;
      \draw[1cell] 
      (a) edge node {1_{X \oplus Y}} (c)
      (a) edge node [swap,pos=.3] {\xi_{X,Y}} (b)
      (b) edge node [swap,pos=.7] {\xi_{Y,X}} (c)
      ;
      \begin{scope}[shift={(2,.5)}]
        \draw[0cell] 
        (0,0) node (x11) {(Y \oplus X) \oplus Z}
        (x11)++(.7,0) node (x12) {Y \oplus (X \oplus Z)}
        (x11)++(-110:1.3) node (x21) {(X \oplus Y) \oplus Z}
        (x12)++(-70:1.3) node (x22) {Y \oplus (Z \oplus X)}
        (x21)++(-70:.7) node (x31) {X \oplus (Y \oplus Z)}
        (x22)++(-110:.7) node (x32) {(Y \oplus Z) \oplus X}
        ;
        \draw[1cell]
        (x21) edge node[pos=.25] {\xi_{X,Y} \oplus 1_Z} (x11)
        (x11) edge[equal] node {} (x12)
        (x12) edge node[pos=.75] {1_Y \oplus \xi_{X,Z}} (x22)
        (x21) edge[equal] node[swap,pos=.25] {} (x31)
        (x31) edge node {\xi_{X,Y \oplus Z}} (x32)
        (x32) edge[equal] node[swap,pos=.75] {} (x22)
        ;
      \end{scope}
    \end{tikzpicture}
  \end{equation}
  A permutative category is also called a \emph{strict symmetric monoidal category}.
  The strictness refers to the conditions that the monoidal sum be strictly associative and unital.
\end{definition}

\begin{definition}\label{definition:strictmonoidalfunctor}
  Suppose $\C$ and $\D$ are permutative categories.  A \emph{symmetric monoidal functor}
  \[
  (P,P^2,P^0) \cn \C \to \D
  \]
  consists of a functor $P \cn \C \to \D$ together with natural transformations
  \[
    PX \oplus PY \fto{P^2} P(X \oplus Y) \andspace
    e \fto{P^0} Pe
  \]
  for objects $X,Y \in \C$, called the \emph{monoidal constraint} and \emph{unit constraint}, respectively.  These data satisfy the following associativity, unity, and symmetry axioms.
  \begin{description}
  \item[Associativity] The following diagram is commutative for all objects $X,Y,Z \in \C$.
    \begin{equation}\label{eq:sm-monoidal}
      \begin{tikzpicture}[xscale=4.2,yscale=1.5,vcenter]
        \tikzset{0cell/.append style={nodes={scale=.85}}}
        \tikzset{1cell/.append style={nodes={scale=.85}}}
        \draw[0cell] 
        (.2,0) node (a0) {
          \big(PX \oplus PY\big) \oplus PZ
        }
        (.8,0) node (b0) {
          PX \oplus \big(PY \oplus PZ\big)
        }
        (0,-1) node (a1) {
          P(X \oplus Y) \oplus PZ
        }
        (1,-1) node (b1) {
          PX \oplus P(Y \oplus Z)
        }
        (.2,-2) node (a2) {
          P\big((X \oplus Y) \oplus Z\big)
        }
        (.8,-2) node (b2) {
          P\big(X \oplus (Y \oplus Z)\big)
        }
        ;
        \draw[1cell] 
        (a0) edge[equal] node {} (b0)
        (a2) edge[equal] node {} (b2)
        (a0) edge['] node {P^2 \oplus 1_{PZ}} (a1)
        (a1) edge['] node[pos=.3] {P^2} (a2)
        (b0) edge node {1_{PX} \oplus P^2} (b1)
        (b1) edge node[pos=.3] {P^2} (b2)
        ;
      \end{tikzpicture}
    \end{equation}

  \item[Unity] The following two diagrams are commutative for all objects $X \in \C$.
    \begin{equation}\label{eq:sm-unit}
      \begin{tikzpicture}[xscale=2.25,yscale=1.5,vcenter]
        \tikzset{0cell/.append style={nodes={scale=.85}}}
        \tikzset{1cell/.append style={nodes={scale=.85}}}
        \draw[0cell] 
        (.5,-.3) node (a) {e \oplus PX}
        (a)++(.5,0) node (b) {PX}
        (0,-1) node (c) {Pe \oplus PX}
        (1,-1) node (d) {P(e \oplus X)}
        ;
        \draw[1cell] 
        (a) edge[equal] node {} (b)
        (c) edge node {P^2} (d)
        (a) edge['] node {P^0 \oplus 1_{PX}} (c)
        (b) edge[equal] node {} (d)
        ;
      \end{tikzpicture}
      \andspace
      \begin{tikzpicture}[xscale=2.25,yscale=1.5,vcenter]
        \tikzset{0cell/.append style={nodes={scale=.85}}}
        \tikzset{1cell/.append style={nodes={scale=.85}}}
        \draw[0cell] 
        (.5,-.3) node (a) {PX \oplus e}
        (a)++(.5,0) node (b) {PX}
        (0,-1) node (c) {PX \oplus Pe}
        (1,-1) node (d) {P(X \oplus e)}
        ;
        \draw[1cell] 
        (a) edge[equal] node {} (b)
        (c) edge node {P^2} (d)
        (a) edge['] node {1_{PX} \oplus P^0} (c)
        (b) edge[equal] node {} (d)
        ;
      \end{tikzpicture}
    \end{equation}
  \item[Symmetry] The following diagram is commutative for all objects $X,Y \in \C$.
      \begin{equation}\label{eq:sm-symm}
        \begin{tikzpicture}[xscale=4,yscale=1.1,vcenter]
          \tikzset{0cell/.append style={nodes={scale=.85}}}
          \tikzset{1cell/.append style={nodes={scale=.85}}}
          \draw[0cell] 
          (0,0) node (a0) {PX \oplus PY}
          (1,0) node (b0) {PY \oplus PX}
          (0,-1) node (a1) {P(X \oplus Y)}
          (1,-1) node (b1) {P(Y \oplus X)}
          ;
          \draw[1cell] 
          (a0) edge node {\xi_{PX,PY}} (b0)
          (a1) edge node {P\xi_{X,Y}} (b1)
          (a0) edge['] node {P^2} (a1)
          (b0) edge node {P^2} (b1)
          ;
        \end{tikzpicture}
      \end{equation}
    \end{description}
    This finishes the definition of a symmetric monoidal functor.
    A \emph{strict symmetric monoidal functor} is a symmetric monoidal functor $P$ such that the monoidal constraint $P^2$ and the unit constraint $P^0$ are both identity natural transformations.
\end{definition}

\begin{definition}\label{definition:monoidal-nt}
  Suppose $P,Q\cn \C \to \D$ are strict symmetric monoidal functors between permutative categories.
  A \emph{monoidal natural transformation}
  \[
  \alpha\cn P \to Q
  \]
  is a natural transformation between the underlying functors such that the
  following monoidal and unit conditions hold:
  \begin{equation}\label{eq:monoidal-nt}
  \alpha_{X \oplus Y} = \alpha_X \oplus \alpha_Y
  \andspace
  \alpha_{e} = 1_{e}
  \end{equation}
  for all objects $X$ and $Y$ in $\C$.
\end{definition}

\begin{definition}\label{definition:permcat}
  We let $\PermCat$ denote the 2-category of small permutative categories, strict symmetric monoidal functors, and monoidal natural transformations.  Identities and compositions are given by those of the underlying functors and natural transformations.
\end{definition}
\begin{remark}
  There are several weaker variants for symmetric monoidal structure both on categories and on functors thereof.
  Thomason \cite[1.9.2]{thomason} shows that each of the standard variants have equivalent stable homotopy theories.
  See \cite{gjo-extending} for a general treatment via 2-dimensional monad theory.

  Here we restrict to permutative categories and strict symmetric monoidal functors because they have the most direct comparison with multicategories.
  For more on this point, see \cref{remark:epz-2nat-strictness} below.
\end{remark}

\section{Multicategories}\label{sec:multicat}

In this section we review the definitions of multicategories, multifunctors, and multinatural transformations.
Several of the details here will be needed for our explanation of the free functor in \cref{sec:free-perm,sec:free-perm-adj} below.
For further details and context we refer the reader to \cite[Chapter 5]{cerberusIII} and \cite{yau-operad}.

\begin{definition}\label{def:profile}
Suppose $C$ is a class.  
\begin{itemize}
\item Denote by
\[\Prof(C) = \coprodover{n \geq 0}\ C^{\times n}\] 
the class of finite ordered sequences of elements in $C$.  An element in $\Prof(C)$ is called a \emph{$C$-profile}.  
\item A typical $C$-profile of length
  $n=\len\angc$ is denoted by $\angc = (c_1, \ldots, c_n) \in
  C^{n}$ or by $\ang{c_i}_i$ to indicate
  the indexing variable.  The empty $C$-profile
  is denoted by $\ang{}$.
\item We let $\oplus$ denote the concatenation of profiles, and note
  that $\oplus$ is an associative binary operation with unit given by
  the empty tuple $\ang{}$.
\item An element in $\Prof(C)\times C$ is denoted
  as $\IMMduc$ with $c'\in C$ and
  $\angc\in\Prof(C)$.
  \dqed 
\end{itemize}
\end{definition}

\begin{definition}\label{def:multicategory}
A \emph{multicategory} $(\M, \gamma, \operadunit)$ consists of the following data.
\begin{itemize}
\item $\M$ is equipped with a class $\ObM$ of \emph{objects}.  We write $\Prof(\M)$ for $\Prof(\Ob\M)$.
\item For $c'\in\ObM$ and $\angc=(c_1,\ldots,c_n)\in\ProfM$, $\M$ is equipped with a set of \emph{$n$-ary operations}
  \[\M\IMMduc = \M\mmap{c'; c_1,\ldots,c_n}\]
  with \emph{input profile} $\angc$ and \emph{output} $c'$.
\item For $\IMMduc \in \ProfMM$ as above and a permutation $\sigma \in
  \Sigma_n$, $\M$ is equipped with an isomorphism of sets
  \[\begin{tikzcd}\M\IMMduc \rar{\sigma}[swap]{\cong} & \M\IMMducsigma,\end{tikzcd}\]
  called the \emph{right action} or the \emph{symmetric group action}, in which
  \[\angc\sigma = (c_{\sigma(1)}, \ldots, c_{\sigma(n)})\]
  is the right permutation of $\angc$ by $\sigma$.
\item For $c \in \ObM$, $\M$ is equipped with an element
  \[\operadunit_c \in \M\IMMcc,\]
  called the \emph{$c$-colored unit}.
\item For
  $c'' \in \ObM$, $\ang{c'} = (c'_1,\ldots,c'_n) \in \ProfM$, and $\ang{c_j} = (c_{j,1},\ldots,c_{j,k_j}) \in \ProfM$ for each $j\in\{1,\ldots,n\}$, let $\angc = \oplus_j\ang{c_j} \in \ProfM$ be the concatenation of the $\ang{c_j}$.
  Then $\M$ is equipped with a map
  \begin{equation}\label{eq:defn-gamma}
    \begin{tikzcd}
      \M\mmap{c'';\ang{c'}} \times
      \prod\limits_{j=1}^n \M\mmap{c_j';\ang{c_j}}
      \rar{\gamma}
      &
      \M\mmap{c'';\ang{c}}
    \end{tikzcd}
  \end{equation}
  called the \emph{composition} or \emph{multicategorical composition}. 
\end{itemize}
These data are required to satisfy the following axioms.
\begin{description}
\item[Symmetric Group Action]
For $\mmap{c';\ang{c}}\in\ProfMM$ with $n=\len\ang{c}$ and
$\sigma,\tau\in\Sigma_n$, the following diagram commutes. 
\begin{equation}\label{multicategory-symmetry}
\begin{tikzcd}
\M\IMMduc \arrow{rd}[swap]{\sigma\tau} \rar{\sigma} & \M\IMMducsigma \dar{\tau}\\
& \M\mmap{c';\angc\sigma\tau}
\end{tikzcd}
\end{equation}
Moreover, the identity permutation in $\Sigma_n$ acts as the identity map on $\M\IMMduc$.
\item[Associativity]
Suppose given
\begin{itemize}
\item $c''' \in \ObM$,
\item $\ang{c''} = (c''_1,\ldots,c''_{n}) \in \ProfM$,
\item $\ang{c_j'} = (c'_{j,1},\ldots,c'_{j,k_{j}}) \in \ProfM$ for each
  $j \in \{1,\ldots,n\}$, and
\item $\ang{c_{j,i}} = (c_{j,i,1},\ldots,c_{j,i,\ell_{j,i}}) \in \ProfM$ for
  each $j\in\{1,\ldots,n\}$ and each $i \in \{1,\ldots,k_j\}$,
\end{itemize}
such that $k_j = \len\ang{c_j'} > 0$ for at least one $j$.  For each $j$,
let $\ang{c_j} = \oplus_{i=1}^{k_j}{\ang{c_{j,i}}}$ denote the concatenation of
the $\ang{c_{j,i}}$.  Let $\ang{c} =
\oplus_{j=1}^n{\ang{c_{j}}}$ denote the concatenation of the $\ang{c_j}$.  Then the \emph{associativity diagram} below commutes.
\begin{equation}\label{multicategory-associativity}
\begin{tikzpicture}[x=40mm,y=17mm,vcenter]
  \draw[0cell=.85] 
  (0,0) node (a) {\textstyle
    \M\mmap{c''';\ang{c''}}
    \times
    \biggl[\prod\limits_{j=1}^n \M\mmap{c''_j;\ang{c'_{j}}}\biggr]
    \times
    \prod\limits_{j=1}^n \biggl[\prod\limits_{i=1}^{k_j} \M\mmap{c'_{j,i};\ang{c_{j,i}}}\biggr] 
  }
  (1,.8) node (b) {\textstyle
    \M\mmap{c''';\ang{c'}}
    \times
    \prod\limits_{j=1}^{n} \biggl[\prod\limits_{i=1}^{k_j} \M\mmap{c'_{j,i};\ang{c_{j,i}}}\biggr]
  }
  (0,-1.2) node (a') {\textstyle
    \M\mmap{c''';\ang{c''}} \times
    \prod\limits_{j=1}^n \biggl[\M\mmap{c_j'';\ang{c_j'}} \times \prod\limits_{i=1}^{k_j} \M\mmap{c'_{j,i};\ang{c_{j,i}}}\biggr]
  }
  (1,-2) node (b') {\textstyle
    \M\mmap{c''';\ang{c''}} \times \prod\limits_{j=1}^n \M\mmap{c_j'';\ang{c_{j}}}
  }
  (1.2,-.6) node (c) {\textstyle
    \M\mmap{c''';\ang{c}}
  }
  ;
  \draw[1cell=.85]
  (a) edge node {\iso} node['] {\mathrm{permute}} (a')
  (a) edge[shorten >=-2ex] node[pos=.3] {(\ga,1)} (b)
  (b) edge node {\ga} (c)
  (a') edge['] node {(1,\textstyle\prod_j \ga)} (b')
  (b') edge['] node {\ga} (c)
  ;
\end{tikzpicture}
\end{equation}

\item[Unity]
Suppose $c' \in \ObM$.
\begin{enumerate}
\item If $\angc = (c_1,\ldots,c_n) \in \ProfM$ has length $n \geq 1$, then the following \emph{right unity diagram} is commutative.
  Here $\boldone$ is the one-point set and $\boldone^n$ is its $n$-fold Cartesian product.
  \begin{equation}\label{enr-multicategory-right-unity}
    \begin{tikzcd} \M\IMMduc \times \boldone^{n} \dar[swap]{1 \times (\times_j \operadunit_{c_j})} \rar{\rho} & \M\IMMduc \dar{1}\\
      \M\IMMduc \times \prod\limits_{j=1}^n \M\IMMcjcj \rar{\gamma} & \M\IMMduc
    \end{tikzcd}
  \end{equation}

\item
For any $\ang{c} \in \ProfM$, the \emph{left unity diagram}
below is commutative.
\begin{equation}\label{enr-multicategory-left-unity}
\begin{tikzcd}
\boldone \times \M\IMMduc \dar[swap]{\operadunit_{c'} \times 1} \rar{\lambda} & 
\M\IMMduc \dar{1}\\
\M\mmap{c';c'} \times \M\IMMduc \rar{\gamma} & \M\IMMduc
\end{tikzcd}
\end{equation}
\end{enumerate}
\item[Equivariance]
Suppose that in the definition of $\gamma$ \eqref{eq:defn-gamma}, $\mathrm{len}\ang{c_j} = k_j \geq 0$.
\begin{enumerate}
\item For each $\sigma \in \Sigma_n$, the following \emph{top equivariance diagram} is commutative.
\begin{equation}\label{enr-operadic-eq-1}
\begin{tikzcd}[column sep=large,cells={nodes={scale=.9}},
every label/.append style={scale=.9}]
\M\mmap{c'';\ang{c'}} \times \prod\limits_{j=1}^n \M\mmap{c'_j;\ang{c_j}} 
\dar[swap]{\gamma} \rar{(\sigma, \sigma^{-1})}
& \M\mmap{c'';\ang{c'}\sigma} \times \prod\limits_{j=1}^n \M\mmap{c'_{\sigma(j)};\ang{c_{\sigma(j)}}} \dar{\gamma}\\
\M\mmap{c'';\ang{c_1},\ldots,\ang{c_n}} \rar{\sigma\langle k_{\sigma(1)}, \ldots , k_{\sigma(n)}\rangle}
& \M\mmap{c'';\ang{c_{\sigma(1)}},\ldots,\ang{c_{\sigma(n)}}}
\end{tikzcd}
\end{equation}
Here $\sigma\langle k_{\sigma(1)}, \ldots , k_{\sigma(n)} \rangle \in \Sigma_{k_1+\cdots+k_n}$ is right action of the block permutation  that permutes the $n$ consecutive blocks of lengths $k_{\sigma(1)}$, $\ldots$, $k_{\sigma(n)}$ as $\sigma$ permutes $\{1,\ldots,n\}$, leaving the relative order within each block unchanged.
\item
Given permutations $\tau_j \in \Sigma_{k_j}$ for $1 \leq j \leq n$,
the following \emph{bottom equivariance
  diagram} is commutative.
\begin{equation}\label{enr-operadic-eq-2}
\begin{tikzcd}[cells={nodes={scale=.9}},
every label/.append style={scale=.9}]
\M\mmap{c'';\ang{c'}} \times \prod\limits_{j=1}^n \M\mmap{c'_j;\ang{c_j}}
\dar[swap]{\gamma} \rar{(1, \times_j \tau_j)} & 
\M\mmap{c'';\ang{c'}} \times \prod\limits_{j=1}^n \M\mmap{c'_j;\ang{c_j}\tau_j}\dar{\gamma} \\
\M\mmap{c'';\ang{c_1},\ldots,\ang{c_n}} \rar{\tau_1 \times \cdots \times \tau_n}
& \M\mmap{c'';\ang{c_1}\tau_1,\ldots,\ang{c_n}\tau_n}
\end{tikzcd}
\end{equation}
Here the block sum $\tau_1 \times\cdots \times\tau_n \in \Sigma_{k_1+\cdots+k_n}$ is the image of $(\tau_1, \ldots, \tau_n)$ under the canonical inclusion \[\Sigma_{k_1} \times \cdots \times \Sigma_{k_n} \to \Sigma_{k_1 + \cdots + k_n}.\]
\end{enumerate}
\end{description}
This finishes the definition of a multicategory.  

Moreover, we define the following.
\begin{itemize}
\item A multicategory is \emph{small} if its class of objects is a set.
\item An \emph{operad} is a multicategory with one object.  If $\M$ is an operad, then its set of $n$-ary operations is denoted by $\M_n$.
\item The \emph{initial operad} $\Mtu$ consists of a single object $*$ and its unit operation.
\item The \emph{terminal multicategory} $\Mterm$ consists of a single object $*$ and a single $n$-ary operation $\iota_n$ for each $n \ge 0$.\dqed
\end{itemize}
\end{definition}

\begin{example}[Endomorphism Operad]\label{example:End}
  Suppose $\M$ is a multicategory and $c$ is an object
  of $\M$.  Then $\End(c)$ is the operad consisting of
  the single object $c$ and $n$-ary operation object
  \[\End(c)_n = \M\mmap{c;\ang{c}},\]
  where $\ang{c}$ denotes the constant $n$-tuple at $c$.  The
  symmetric group action, unit, and composition of $\End(c)$ are given
  by those of $\M$.
\end{example}

\begin{example}[Endomorphism Multicategory]\label{ex:endc}
  Suppose $(\C,\oplus,e,\xi)$ is a small permutative category.  Then the
  \emph{endomorphism multicategory} $\End(\C)$ is the small multicategory with object set $\Ob\C$ and with
  \[\End(\C)\mmap{Y;\ang{X}} = \C(X_1 \oplus \cdots \oplus
  X_n , Y)\]
  for $Y \in \Ob\C$ and $\ang{X}=(X_1,\cdots,X_n) \in (\Ob\C)^{\times n}$.  An empty $\oplus$ means the unit object $e$.
\end{example}

\begin{example}[Underlying Category]\label{ex:unarycategory}
Each multicategory $(\M,\ga,\operadunit)$ has an underlying category with
\begin{itemize}
\item the same objects,
\item identities given by the colored units, and
\item composition given by 
\[\begin{tikzcd}[column sep=large]
\M\scmap{b;c} \times \M\scmap{a;b} \ar{r}{\gamma} & \M\scmap{a;c}
\end{tikzcd}\]
for objects $a, b, c \in \M$.\dqed
\end{itemize}   
\end{example}

\subsection*{The 2-Category of Small Multicategories}

\begin{definition}\label{def:multicategory-functor}
A \emph{multifunctor} $F \cn \M \to \N$ between multicategories $\M$ and $\N$ consists of
\begin{itemize}
\item an object assignment $F \cn \ObM \to \ObN$ and
\item for each $\mmap{c';\ang{c}} \in \ProfMM$ with $\angc=(c_1,\ldots,c_n)$, a component morphism 
\[F \cn \M\mmap{c';\ang{c}} \to \N\mmap{Fc';F\ang{c}},\] where $F\angc=(Fc_1,\ldots,Fc_n)$.
\end{itemize}
These data are required to preserve the symmetric group action, the colored units, and the composition in the following sense.
\begin{description}
\item[Symmetric Group Action] For each $\IMMduc$ as above and each
  permutation $\sigma \in \Sigma_n$, the following
  diagram is commutative.
\begin{equation}\label{multifunctor-equivariance}
\begin{tikzcd}
\M\mmap{c';\ang{c}} \ar{d}{\cong}[swap]{\sigma} \ar{r}{F} & \N\mmap{Fc';F\ang{c}} \ar{d}{\cong}[swap]{\sigma}\\
\M\mmap{c';\ang{c}\sigma} \ar{r}{F} & \N\mmap{c';F\ang{c}\sigma}\end{tikzcd}
\end{equation}
\item[Units] For each $c\in\ObM$, the following diagram is commutative.
\begin{equation}\label{multifunctor-unit}
\begin{tikzpicture}[x=25mm,y=15mm,vcenter]
  \draw[0cell] 
  (0,0) node (a) {\boldone}
  (1,.5) node (b) {\M\mmap{c;c}}
  (1,-.5) node (b') {\N\mmap{Fc;Fc}}
  ;
  \draw[1cell] 
  (a) edge node {\operadunit_c} (b)
  (a) edge node[swap,pos=.6] {\operadunit_{Fc}} (b')
  (b) edge node {F} (b')
  ;
\end{tikzpicture}
\end{equation} 
\item[Composition] For $c''$, $\ang{c'}$, and $\ang{c} =
  \oplus_j\ang{c_j}$ as in the definition of $\gamma$
  \eqref{eq:defn-gamma}, the following diagram is commutative.
\begin{equation}\label{v-multifunctor-composition}
\begin{tikzcd}[column sep=large,cells={nodes={scale=.9}},
every label/.append style={scale=.9}]
\M\mmap{c'';\ang{c'}} \times \prod\limits_{j=1}^n \M\mmap{c'_j;\ang{c_j}} \dar[swap]{\gamma} \ar{r}{(F,\prod_j F)} & \N\mmap{Fc'';F\ang{c'}} \times \prod\limits_{j=1}^n \N\mmap{Fc'_j;F\ang{c_j}} \dar{\gamma}\\  
\M\mmap{c'';\ang{c}} \ar{r}{F} & \N\mmap{Fc'';F\ang{c}}
\end{tikzcd}
\end{equation}
\end{description}
This finishes the definition of a multifunctor.  

Moreover, we define the following.
\begin{itemize}
\item A multifunctor $\M \to \N$ is also called an \emph{$\M$-algebra in $\N$}.
\item For another multifunctor $G \cn \N\to\P$ between multicategories, where $\P$ has object class $\ObP$, the \emph{composition} $GF \cn \M\to\P$ is the multifunctor defined by composing the assignments on objects 
\[\begin{tikzcd} \ObM \ar{r}{F} & \ObN \ar{r}{G} & \ObP
\end{tikzcd}\]
and the morphisms on $n$-ary operations
\[\begin{tikzcd}
\M\mmap{c';\ang{c}} \ar{r}{F} & \N\mmap{Fc';F\ang{c}} \ar{r}{G} & \P\mmap{GFc';GF\ang{c}}.
\end{tikzcd}\]
\item The \emph{identity multifunctor} $1_{\M} \cn \M\to\M$ is defined by the identity assignment on objects and the identity morphism on $n$-ary operations.
\item An \emph{operad morphism} is a multifunctor between two multicategories with one object.\dqed
\end{itemize}
\end{definition}

\begin{definition}\label{def:multicat-natural-transformation}
Suppose $F,G \cn \M\to\N$ are multifunctors as in \cref{def:multicategory-functor}.
A \emph{multinatural transformation} $\theta \cn F\to G$ consists of component morphisms
\[\theta_c \in \N\mmap{Gc;Fc} \forspace c\in\ObM\]
such that the following \emph{naturality diagram} commutes for each $\mmap{c';\ang{c}} \in \ProfMM$ with $\angc=(c_1,\ldots,c_n)$.
\begin{equation}\label{multinat}
\begin{tikzpicture}[x=25mm,y=12mm,vcenter]
  \draw[0cell=.85]
  (.5,0) node (a) {\M\mmap{c';\ang{c}}}
  (1,1) node (b) {\boldone \times \M\mmap{c';\ang{c}}}
  (3,1) node (c) {\N\mmap{Gc';Fc'} \times \N\mmap{Fc';F\ang{c}}}
  (3.5,0) node (d) {\N\mmap{Gc';F\ang{c}}}
  (1,-1) node (b') {\M\mmap{c';\ang{c}}\times \prod\limits_{j=1}^n \boldone}
  (3,-1) node (c') {\N\mmap{Gc';G\ang{c}} \times \prod\limits_{j=1}^n \N\mmap{Gc_j;Fc_j}}
  ;
  \draw[1cell=.85] 
  (a) edge node[pos=.3] {\la^\inv} (b)
  (a) edge node[swap,pos=.2] {\rho^\inv} (b')
  (b) edge node {\theta_{c'} \times F} (c)
  (c) edge node[pos=.7] {\ga} (d)
  (b') edge node {G \times \prod\limits_{j=1}^n \theta_{c_j}} (c')
  (c') edge['] node[pos=.7] {\ga} (d)
  ;
\end{tikzpicture}
\end{equation}
This finishes the definition of a multinatural transformation.  

Moreover, we define the following.
\begin{itemize}
\item The \emph{identity multinatural transformation} $1_F \cn F\to F$ has components \[(1_F)_c = \operadunit_{Fc} \forspace c\in\ObM.\dqed\]
\end{itemize} 
\end{definition}

\begin{definition}\label{def:multinatural-composition}
Suppose $\theta \cn F \to G$ is a multinatural transformation between multifunctors as in \cref{def:multicat-natural-transformation}.
\begin{enumerate}
\item Suppose $\beta \cn G \to H$ is a multinatural transformation for a multifunctor $H \cn \M \to \N$.  The \emph{vertical composition}
\begin{equation}\label{multinatvcomp}
  \beta\theta \cn F \to H
\end{equation} 
is the multinatural transformation with components at $c \in \ObM$ given by the following composites.
  \[
  \begin{tikzpicture}[x=45mm,y=15mm]
    \draw[0cell] 
    (0,0) node (a) {\boldone}
    (0,-1) node (b) {\boldone \times \boldone}
    (1,-1) node (c) {\N\mmap{Hc;Gc} \times \N\mmap{Gc;Fc}}
    (1,0) node (d) {\N\mmap{Hc;Fc}}
    ;
    \draw[1cell] 
    (a) edge node['] {\la^\inv} (b)
    (b) edge node {\be_c \times \theta_c} (c)
    (c) edge['] node {\ga} (d)
    (a) edge node {(\be\theta)_c} (d)
    ;
  \end{tikzpicture}
  \]
\item Suppose $\theta' \cn F' \to G'$ is a multinatural transformation for multifunctors $F', G' \cn \N \to \P$.  The \emph{horizontal composition}
\begin{equation}\label{multinathcomp}
\theta' \ast \theta \cn F'F \to G'G
\end{equation} 
is the multinatural transformation with components at $c \in \ObM$ given by the following composites.
\[
\begin{tikzpicture}[x=50mm,y=12mm]
  \draw[0cell=.9] 
  (0,0) node (a) {\boldone}
  (0,-2) node (b) {\boldone \times \boldone}
  (1,-2) node (c) {\P\mmap{G'Gc;F'Gc} \times \N\mmap{Gc;Fc}}
  (1,-1) node (d) {\P\mmap{G'Gc;F'Gc} \times \P\mmap{F'Gc;F'Fc}}
  (1,0) node (e) {\P\mmap{G'Gc;F'Fc}}
  ;
  \draw[1cell=.9] 
  (a) edge node {(\theta' * \theta)_c} (e)
  (a) edge['] node {\la^\inv} (b)
  (b) edge node {\theta'_{Gc} \times \theta_c} (c)
  (c) edge['] node {1 \times F'} (d)
  (d) edge['] node {\ga} (e)
  ;
\end{tikzpicture}
\]
\end{enumerate}
This finishes the definition.
\end{definition}

\begin{theorem}[{\cite[2.4.26]{johnson-yau}}]\label{multicat-2cat}
There is a 2-category $\Multicat$ consisting of the following data.
\begin{itemize}
\item Its objects are small multicategories.
\item For small multicategories $\M$ and $\N$, the hom category 
\[\Multicat(\M,\N)\]
has
\begin{itemize}
\item multifunctors $\M\to\N$ as 1-cells,
\item multinatural transformations as 2-cells,
\item vertical composition as composition, and
\item identity multinatural transformations as identity 2-cells.
\end{itemize}
\item The identity 1-cell $1_{\M}$ is the identity multifunctor $1_{\M}$.
\item Horizontal composition of 1-cells is the composition of multifunctors.
\item Horizontal composition of 2-cells is that of multinatural transformations.
\end{itemize}
\end{theorem}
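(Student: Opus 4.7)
The plan is to unpack the claim into the individual 2-category axioms and then verify each by a direct diagram chase from the definitions, relying on the multicategory axioms (associativity, unity, equivariance) and the multinaturality diagram \eqref{multinat}.

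First I would check that for fixed $\M,\N$, the proposed hom-category $\Multicat(\M,\N)$ is a category. Well-definedness of vertical composition $\be\theta$ requires that the components $(\be\theta)_c = \ga(\be_c, \theta_c)$ satisfy the naturality diagram \eqref{multinat}; this follows by pasting two copies of \eqref{multinat} (one for $\theta$, one for $\be$) and then collapsing using the associativity axiom \eqref{multicategory-associativity} together with the right and left unity axioms to absorb the extra $\boldone$-factors. Associativity of vertical composition then falls out of associativity of $\ga$, and the claim that the identity $(1_F)_c = \operadunit_{Fc}$ is a two-sided unit follows from the right and left unity diagrams \eqref{enr-multicategory-right-unity} and \eqref{enr-multicategory-left-unity}.

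Next I would verify that horizontal composition is a well-defined 2-functor. For 1-cells, associativity and unitality of composition of multifunctors is immediate from the componentwise definition in \cref{def:multicategory-functor}, since both are governed by the underlying assignments on objects and on hom-sets. For 2-cells, the horizontal composite $\theta' * \theta$ has components given in \cref{def:multinatural-composition}; one must check \eqref{multinat} for these components. This is the main technical step. The argument proceeds by expanding both legs of \eqref{multinat} for $\theta' * \theta$, inserting the naturality of $\theta$ for $F$ and $G$ in the appropriate positions, and then using the multifunctoriality of $F'$ (especially \eqref{v-multifunctor-composition}) to move $F'$ past $\ga$. The naturality of $\theta'$ for $F'$ and $G'$ is then applied, and the associativity and equivariance axioms of $\P$ are used to rearrange the resulting composite. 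Functoriality with respect to vertical composition, i.e., $(\be' * \be)(\theta' * \theta) = (\be'\theta') * (\be\theta)$, along with preservation of identities $1_{G'} * 1_G = 1_{G'G}$, follows similarly using associativity of $\ga$ together with unity.

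Finally I would check the middle-four interchange and the unit/associativity constraints for horizontal composition of 1- and 2-cells. These are formal once horizontal composition is known to be well-defined, since each ingredient is constructed from the strictly associative composition in $\N$ (and multifunctoriality of the relevant $F'$, $G'$), so no coherence up to isomorphism is required.

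I expect the only non-routine step to be verifying the naturality diagram \eqref{multinat} for $\theta' * \theta$; all other axioms reduce to invoking associativity and unity of $\ga$ applied componentwise. Because the entire statement is essentially bookkeeping, I would present it as a citation of \cite[2.4.26]{johnson-yau} together with a brief remark that the key computations are the two diagram chases indicated above.
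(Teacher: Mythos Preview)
The paper does not give a proof of this theorem at all: it is stated with a citation to \cite[2.4.26]{johnson-yau} and left at that. Your sketch is a correct and reasonable outline of the verification one would carry out, and your closing remark---that you would present it as a citation together with a brief indication of the key diagram chases---is already more than the paper itself does.
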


Recall from \cref{ex:endc} the endomorphism multicategory $\End(\C)$ of a permutative category $\C$.  A strict symmetric monoidal functor $F\cn \C \to \D$ between permutative categories induces a multifunctor
\[
\End(F)\cn \End(\C) \to \End(\D)
\]
by applying $F$ to the operations of $\End(\C)$, which are morphisms in $\C$.  Similarly, a monoidal natural transformation $\theta\cn F \to G$ between strict symmetric monoidal functors induces a multinatural transformation
\[
\End(\theta)\cn \End(F) \to \End(G)
\]
whose components are given by those of $\theta$.  See \cite[Section 5.3]{cerberusIII} for further explanation of these constructions.
\begin{proposition}[{\cite[5.3.6]{cerberusIII}}]\label{proposition:end-2fun}
  The construction $\End(\C)$ of \cref{ex:endc} defines a 2-functor
  \[
  \End\cn \PermCat \to \Multicat
  \]
  from the 2-category of small permutative categories to the 2-category of small multicategories.
\end{proposition}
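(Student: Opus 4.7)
The plan is to verify, in order, that (i) $\End(F)$ satisfies the three multifunctor axioms of \cref{def:multicategory-functor} whenever $F$ is strict symmetric monoidal; (ii) $\End(\theta)$ satisfies the naturality axiom \eqref{multinat} whenever $\theta$ is a monoidal natural transformation; and (iii) these assignments respect identities, vertical composition, horizontal composition, and composition of $1$-cells. Each verification reduces an axiom for $\End$ to one of the axioms from \cref{definition:permutativecategory,definition:strictmonoidalfunctor,definition:monoidal-nt}, together with strictness of the monoidal sum.

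For (i), the assignment sends $f \in \C(X_1 \oplus \cdots \oplus X_n, Y)$ to $F(f)$, whose source equals $FX_1 \oplus \cdots \oplus FX_n$ by strict monoidality of $F$. Composition in $\End(\C)$ is given by $\ga(f;g_1,\ldots,g_n) = f \circ (g_1 \oplus \cdots \oplus g_n)$; functoriality of $F$ together with $F(g_1 \oplus \cdots \oplus g_n) = Fg_1 \oplus \cdots \oplus Fg_n$ yields \eqref{v-multifunctor-composition}. The unit axiom \eqref{multifunctor-unit} follows because the colored unit $\operadunit_c$ is $1_c$ and $F$ preserves identities. The equivariance axiom \eqref{multifunctor-equivariance} requires that $F$ commute with the permutation isomorphisms built iteratively out of $\xi$; this is exactly where the symmetry axiom \eqref{eq:sm-symm} is used, which in the strict case reduces to $F(\xi_{X,Y}) = \xi_{FX,FY}$, and one then builds up the claim for an arbitrary $\sigma \in \Sigma_n$ by induction on a factorization of $\sigma$ into adjacent transpositions. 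I expect this equivariance check to be the main (though still routine) obstacle, since it is the only step that makes essential, nontrivial use of the symmetric monoidal structure rather than mere functoriality.

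For (ii), a monoidal natural transformation $\theta \cn F \to G$ has components $\theta_c \cn Fc \to Gc$ that are $1$-ary operations of $\End(\D)$. To verify \eqref{multinat} at an operation $f \in \C(X_1 \oplus \cdots \oplus X_n, Y)$, chase $f$ around both paths: the upper path produces $\theta_Y \circ F(f)$ and the lower path produces $G(f) \circ (\theta_{X_1} \oplus \cdots \oplus \theta_{X_n})$. These agree by the ordinary naturality square of $\theta$ at $f$, after using the monoidal axiom \eqref{eq:monoidal-nt} to rewrite $\theta_{X_1 \oplus \cdots \oplus X_n} = \theta_{X_1} \oplus \cdots \oplus \theta_{X_n}$.

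For (iii), identity $1$-cells and identity $2$-cells are preserved by inspection: $\End(1_\C) = 1_{\End(\C)}$, and $\End(1_F)$ has components $\operadunit_{Fc} = 1_{Fc} = (1_F)_c$. Preservation of $1$-cell composition $\End(GF) = \End(G) \circ \End(F)$ holds componentwise on operations. For $2$-cells, the vertical composite $(\beta\theta)_c = \be_c \circ \theta_c$ in $\D$ matches \eqref{multinatvcomp} in $\End(\D)$, because $\ga$ on $1$-ary operations is ordinary composition (\cref{ex:unarycategory}). The horizontal composite \eqref{multinathcomp} unfolds to $\theta'_{Gc} \circ F'(\theta_c)$, which is precisely the component of the horizontal composite $\theta' \ast \theta$ in $\PermCat$. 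This completes the verification that $\End$ is a $2$-functor.
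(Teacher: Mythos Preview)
Your proposal is correct and matches the approach the paper indicates: the paper defines $\End(F)$ by applying $F$ to operations and $\End(\theta)$ by reusing the components of $\theta$, then defers the axiom checks to the cited reference, and your verification fills in exactly those checks in the expected way. There is nothing to correct or add.
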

\begin{remark}\label{remark:permcat}
  In \cite{cerberusIII} $\PermCat$ denotes the larger 2-category of small permutative categories and general (not-necessarily-strict) symmetric monoidal functors.  The proof of \cite[5.3.6]{cerberusIII} shows that $\End$ is 2-functorial with respect to these more general 1-cells.  The statement given here is restricted to the subcategory consisting of strict symmetric monoidal functors.  In \cref{remark:epz-2nat-strictness} we note one reason for this restriction in the present work.
\end{remark}

\section{Free Permutative Category on a Multicategory}\label{sec:free-perm}

In this section we describe a 2-functor
\[
F \cn \Multicat \to \permcat
\]
that is left 2-adjoint to the endomorphism 2-functor, $\End$.
Our description follows that of \cite[Theorem~4.2]{elmendorf-mandell-perm}, but we include full details here because this free construction is crucial to our main result, \cref{theorem:multicat-perm-hty-thy}.
The definition makes use of sequences $\ang{x}$, indexing functions $f$, and permutations $\si^k_{g,f}$.
We make the following preliminary definitions.
\begin{definition}\label{definition:free-perm-helper}
  For each natural number $r \ge 0$, let $\ufs{r}$ denote the finite set with $r$ elements
  \[
    \ufs{r} = \{1, \ldots, r\} \forspace r \ge 1, \andspace \ufs{0} = \varnothing.
  \]
  Suppose $\ang{x}$ is a sequence of length $r$, with each $x_i \in \M$.
  Suppose
  \[
  f\cn \ufs{r} \to \ufs{s} \andspace g\cn \ufs{s} \to \ufs{t}
  \]
  are functions of finite sets, for $r,s,t \ge 0$.
  Then we define the following.
  \begin{itemize}
  \item For $j \in \ufs{s}$, let
    \begin{equation}\label{eq:x-finv}
      \ang{x}_{f^\inv(j)} = \ang{x_i}_{i \in f^\inv(j)}
    \end{equation}
    denote the sequence formed by those $x_i$ with $i \in f^\inv(j)$, ordered as in $\ang{x}$.
    Similarly, for $k \in \ufs{t}$, let
    \[
    \ang{\phi}_{g^\inv(k)} = \ang{\phi_j}_{j \in g^\inv(k)},
    \]
    where $\ang{\phi}$ is a length-$s$ sequence of operations in $\M$.
  \item For $k \in \ufs{t}$, let $\si^k_{g,f} \in \Si_t$ be the unique permutation such that
    \begin{equation}\label{eq:sigma-kgf}
      \bigg[
      \bigoplus_{j \in g^\inv(k)} \ang{x}_{f^\inv(j)}
      \bigg]
      \cdot \sigma^k_{g,f} 
      =
      \ang{x}_{(gf)^\inv(k)},
    \end{equation}
    where the sequence on the left hand side is the concatenation of sequences in the order specified by $g^\inv(k)$.
    We will use the action of these permutations on both objects and operations.
    \dqed
  \end{itemize}
\end{definition}

\begin{definition}\label{definition:free-perm}
  Suppose $\M$ is a multicategory.  Define a permutative category $F\M$, called the \emph{free permutative category on $\M$}, as follows.
  \begin{description}
  \item[Objects] The objects of $F\M$ are given by the $(\Ob\M)$-profiles: finite ordered sequences $\ang{x} = (x_1,\ldots,x_r)$ of objects of $\M$, with $r \ge 0$.
  \item[Morphisms] Given sequences $\ang{x}$ and $\ang{y}$ with lengths $r$ and $s$, respectively, the morphisms from $\ang{x}$ to $\ang{y}$ in $F\M$ are given by pairs $(f,\ang{\phi})$ consisting of
    \begin{itemize}
    \item a function
      \[
      f \cn \ufs{r} \to \ufs{s}
      \]
      called the \emph{index map} and
    \item an ordered sequence of operations
      \[
      \ang{\phi} \withspace \phi_j \in \M\mmap{y_j;\ang{x_i}_{i \in f^\inv(j)}}
      \]
      for $j \in \ufs{s}$.
    \end{itemize}
    The identity morphism on $\ang{x}$ is given by $1_{\ufs{r}}$ and the tuple of unit operations $1_{x_i}$.

  \item[Composition] The composition of a pair of morphisms
    \[
    \ang{x} \fto{(f,\ang{\phi})} \ang{y} \fto{(g,\ang{\psi})} \ang{z}
    \]
    is the pair
    \begin{equation}\label{eq:FM-comp}
    \big(gf , \ang{\theta_k \cdot \si^k_{g,f}}_{k \in \ufs{t}} \big),
    \end{equation}
    where, for each $k \in \ufs{t}$,
    \begin{equation}\label{eq:thetak}
      \theta_k = \ga\lrscmap{\psi_k;\ang{\phi}_{g^\inv(k)}} \in \M\lrscmap{\;\bigoplus_{j \in g^\inv(k)} \ang{x}_{f^\inv(j)} ; z_k}.
    \end{equation}
    Note that the input profile for $\theta_k$ is the concatenation of $\ang{x}_{f^\inv(j)}$ for $j \in g^\inv(k)$.
    By definition \cref{eq:sigma-kgf}, the right action of $\si^k_{g,f}$ permutes this input profile to $\ang{x}_{(gf)^\inv(k)}$.
    We check that composition of morphisms is unital and associative in 
    \cref{proposition:free-perm} below.

  \item[Monoidal Sum]
    The monoidal sum 
    \[
      \oplus \cn F\M \times F\M \to F\M
    \]
    is given on objects by concatenation of sequences.
    The monoidal sum of morphisms
    \[
      (f,\ang{\phi}) \cn \ang{x} \to \ang{y}
      \andspace
      (f',\ang{\phi'}) \cn \ang{x'} \to \ang{y'},
    \]
    is the pair
    \[
      (f \oplus f', \ang{\phi} \oplus \ang{\phi'})
    \]
    where $f \oplus f'$ denotes the composite
    \[
      \ufs{r + r'} \iso \ufs{r} \bincoprod \ufs{r'} \fto{f \bincoprod f'} \ufs{s} \bincoprod \ufs{s'} \iso \ufs{s + s'}
    \]
    given by the disjoint union of $f$ with $f'$ and the canonical order-preserving isomorphisms.
    Functoriality of the monoidal sum follows because disjoint union of indexing functions preserves preimages and the operations in a composite \eqref{eq:FM-comp} are determined elementwise for the indexing set of the codomain.

  \item[Monoidal Unit]
    The monoidal unit is the empty sequence $\ang{}$.  The unit and associativity isomorphisms for $\oplus$ are identities. 

  \item[Symmetry] 
    The symmetry isomorphism for sequences $\ang{x}$ of length $r$ and $\ang{x'}$ of length $r'$ is 
    \[
      \xi_{\ang{x},\ang{x'}} = \big(\tau_{r,r'} , \ang{1}\big)
    \]
    where
    \[
      \tau_{r,r'} \cn
      \ufs{r + r'} \iso \ufs{r} \bincoprod \ufs{r'} \to \ufs{r'} \bincoprod \ufs{r}
      \iso \ufs{r' + r}
    \]
    is induced by the block-transposition of $\ufs{r}$ with $\ufs{r'}$, keeping the relative order within each block fixed.

    Concatenation of sequences is strictly associative and unital.
    The symmetry and hexagon axioms \eqref{symmoncatsymhexagon} follow from the corresponding equalities of block permutations.
  \end{description}
  This completes the definition of $F\M$ and the data for its permutative structure.
\end{definition}

\begin{proposition}\label{proposition:free-perm}
  In the context of \cref{definition:free-perm}, $F\M$ is a permutative category.
\end{proposition}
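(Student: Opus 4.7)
The plan is to verify in turn: the category axioms for $F\M$, the strict associativity and unitality of $\oplus$, functoriality of $\oplus$, and then the naturality of $\xi$ together with the symmetry and hexagon axioms. All of it amounts to bookkeeping around the definitions in \cref{definition:free-perm-helper,definition:free-perm}, with the multicategory axioms of \cref{def:multicategory} intervening at precisely two points: once to combine operations along fibers via $\gamma$, and once to match permutations via equivariance. For the unit axioms I would compose $(f, \ang{\phi})$ with the identity on $\ang{x}$, note that the defining equation \eqref{eq:sigma-kgf} forces each $\sigma^k_{g,f}$ to be the identity in this case, and then invoke the right and left unity diagrams \eqref{enr-multicategory-right-unity} and \eqref{enr-multicategory-left-unity} in $\M$ to reduce each $\theta_k$ to $\phi_k$.

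The main obstacle will be associativity of composition. Given three composable morphisms with index maps $f$, $g$, and $h$, both associations produce the composite index map $hgf$, so the content is in matching the operations and the permutations at each output index $k$. Expanding the definitions, one association produces an operation of the shape $\gamma\big(\chi_k; \ang{\gamma(\psi_j; \ang{\phi}_{f^{-1}(j)})}_{j \in g^{-1}(k)}\big)$ acted on by one permutation, while the other produces $\gamma\big(\gamma(\chi_k; \ang{\psi}_{h^{-1}(k)}); \cdots\big)$ acted on by another. Equality of the unordered multicategorical composites is the associativity axiom \eqref{multicategory-associativity} in $\M$. Matching the permutations requires tracking the block-permutation shuffle appearing in \eqref{multicategory-associativity} through the top and bottom equivariance axioms \eqref{enr-operadic-eq-1} and \eqref{enr-operadic-eq-2}; by the defining property \eqref{eq:sigma-kgf}, each $\sigma^\bullet$ is the unique permutation reordering a specified concatenation of subsequences to the canonical order $\ang{x}_{(hgf)^{-1}(k)}$, and uniqueness forces the two composite permutations to agree.

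For the monoidal data, concatenation of sequences is strictly associative and unital with unit $\ang{}$, and the construction $f \oplus f'$ via the order-preserving isomorphism $\ufs{r + r'} \iso \ufs{r} \bincoprod \ufs{r'}$ inherits strict associativity and unitality, so $\oplus$ is strictly associative and unital on $F\M$. Functoriality reduces to the observation that preimages under $f \oplus f'$ decompose as the disjoint union of preimages under $f$ and $f'$, so the composition formula \eqref{eq:FM-comp} is computed blockwise and is compatible with the disjoint-union construction on operations and permutations. For the symmetry $\xi_{\ang{x}, \ang{x'}} = (\tau_{r,r'}, \ang{1})$, naturality reduces to the identity $\tau_{s,s'} \circ (f \oplus f') = (f' \oplus f) \circ \tau_{r,r'}$ of block permutations, combined with the fact that all operation-tuples on both sides consist of colored units and so match via left and right unity in $\M$. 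The involution axiom is immediate from $\tau_{r',r} \tau_{r,r'} = 1$, and the hexagon axiom \eqref{symmoncatsymhexagon} follows from the corresponding identity of block permutations on $\ufs{r + r' + r''}$ together with trivial operation-tuples. The permutation bookkeeping in the associativity verification is expected to dominate the write-up; everything else is direct from concatenation arithmetic and unity in $\M$.
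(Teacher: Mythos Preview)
Your proposal is correct and follows essentially the same approach as the paper: unitality via the observation that $\sigma^k_{g,f}$ is the identity when $f$ or $g$ is, associativity of composition via the associativity axiom in $\M$ together with the top and bottom equivariance axioms and the uniqueness clause in \eqref{eq:sigma-kgf}, and the permutative structure via concatenation arithmetic and block-permutation identities. The only organizational difference is that the paper records the monoidal-sum functoriality and the symmetry/hexagon verifications inside \cref{definition:free-perm} itself, so its proof of the proposition concentrates on the category axioms; you fold those checks into the proof, which is fine.
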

\begin{proof}
  Observe that $\si^k_{g,f}$ is the identity permutation if either $f$ or $g$ is an identity.
  This implies that composition is strictly unital.
  To verify associativity of composition, suppose given the following objects and morphisms in $F\M$:
  \[
  \ang{x} \fto{(f,\ang{\phi})}
  \ang{y} \fto{(g,\ang{\psi})}
  \ang{z} \fto{(h, \ang{\mu})}
  \ang{w}
  \]
  with index maps
  \[
  \ufs{r} \fto{f}
  \ufs{s} \fto{g}
  \ufs{t} \fto{h}
  \ufs{u}.
  \]

  The two composites to be compared are
  \begin{equation}\label{eq:free-perm-assoc}
    \big((h,\ang{\mu})(g,\ang{\psi})\big)(f,\ang{\phi})
    \andspace
    (h,\ang{\mu})\big((g,\ang{\psi})(f,\ang{\phi})\big).
  \end{equation}
  The index map of both composites is $hgf$.  For each $\ell \in \ufs{u}$, the $\ell$th operations of \cref{eq:free-perm-assoc} are
  \[
  \ga\lrscmap{\theta_\ell \cdot \si^\ell_{h,g}; \ang{\phi}_{(hg)^\inv(\ell)}} \cdot \si^\ell_{hg,f}
  \andspace
  \ga\lrscmap{\mu_\ell;\ang{\om_k \cdot \si^k_{g,f}}_{k \in h^\inv(\ell)}} \cdot \si^\ell_{h,gf},
  \]
  respectively, where
  \[
  \theta_{\ell} = \ga\lrscmap{\mu_\ell; \ang{\psi}_{h^\inv(\ell)}}
  \andspace
  \om_{k} = \ga\lrscmap{\psi_k;\ang{\phi}_{g^\inv(k)}}.
  \]
  To see that the two composites are equal, we first use the top equivariance diagram \eqref{enr-operadic-eq-1} with $\si = \si^\ell_{h,g}$.
  For each $j \in \ufs{s}$ the action of $\si$ gives
  \[
  \ang{\phi_{\si^\inv(j)}}_{j \in (hg)^\inv(\ell)}
  = \bigoplus_{k \in h^\inv(\ell)}\ang{\phi}_{g^\inv(k)}
  \]
  by \eqref{eq:sigma-kgf} for $h$ and $g$.
  Then by top equivariance, with $r_j = |f^\inv(j)|$, we have
  \[
  \ga\lrscmap{\theta_\ell \cdot \si^\ell_{h,g}; \ang{\phi}_{(gh)^\inv(\ell)}} \cdot \si^\ell_{hg,f}
  =
  \ga\lrscmap{\theta_\ell; \bigoplus_{k \in h^\inv(\ell)}\ang{\phi}_{g^\inv(k)}}
  \cdot \si\ang{r_{\si(1)},\ldots,r_{\si(s)}}
  \cdot \si^\ell_{hg,f}.
  \]
  Next we use the bottom equivariance diagram \eqref{enr-operadic-eq-2} and have
  \[
  \ga\lrscmap{\mu_\ell;\ang{\om_k \cdot \si^k_{g,f}}_{k \in h^\inv(\ell)}} \cdot \si^\ell_{h,gf}
  =
  \ga\lrscmap{\mu_\ell; \ang{\om}_{h^\inv(\ell)}}
  \cdot \left(\;\bigoplus_{k \in h^\inv(\ell)} \si^k_{g,f}\right)
  \cdot \si^\ell_{h,gf}.
  \]

  To complete the verification that the two composites in \cref{eq:free-perm-assoc} are equal, observe the following for each $\ell \in \ufs{u}$.
  First, by associativity \eqref{multicategory-associativity} in $\M$ we have
  \begin{align*}
  \ga\lrscmap{\theta_\ell ; \bigoplus_{k \in h^\inv(\ell)} \ang{\phi}_{g^\inv(k)}}
  & = \ga\lrscmap{\ga\lrscmap{\mu_\ell; \ang{\psi}_{h^\inv(\ell)}} ; \bigoplus_{k \in h^\inv(\ell)} \ang{\phi}_{g^\inv(k)}}\\
  & = \ga\lrscmap{\mu_\ell;
     \left\langle\ga\lrscmap{\psi_k; \ang{\phi}_{g^\inv(k)}}\right\rangle_{k \in h^\inv(\ell)}}\\
  & = \ga\lrscmap{\mu_\ell;
    \ang{\om}_{h^\inv(\ell)}}
  \end{align*}
  in
  \[
  \M\lrscmap{
    \;\bigoplus_{k \in h^\inv(\ell)} \left(
    \;\bigoplus_{j \in g^\inv(k)} \ang{x}_{f^\inv(j)}
    \right)
    ;
    w_\ell
  }.
  \]
  Second, by uniqueness of the permutation in $\Si_{|(hgf^\inv)(\ell)|}$ whose right action sends 
  \[
  \bigoplus_{k \in h^\inv(\ell)} \left(
  \;\bigoplus_{j \in g^\inv(k)} \ang{x}_{f^\inv(j)}
  \right),
  \]
  to $\ang{x}_{(hgf)^\inv(\ell)}$, we have
  \[
  \si\ang{r_{\si(1)},\ldots,r_{\si(s)}}
  \cdot \si^\ell_{hg,f}
  =
  \left(
  \;\bigoplus_{k \in h^\inv(\ell)} \si^k_{g,f}
  \right) \cdot \si^\ell_{h,gf}.
  \]
  Therefore the two sides of \eqref{eq:free-perm-assoc} are equal for each triple of composable morphisms.
  This completes the proof that $F\M$ is a category.  The symmetric monoidal axioms for $F\M$ are verified as part of \cref{definition:free-perm}.
\end{proof}

\begin{definition}\label{definition:free-smfun}
  Suppose $H\cn \M \to \N$ is a multifunctor.
  Define a strict symmetric monoidal functor
  \[
  FH \cn F\M \to F\N
  \]
  via the following assignment on objects and morphisms.  For a sequence $\ang{x}$ of length $r$, define
  \[
  (FH)\ang{x} = \ang{Hx_i}_{i \in \ufs{r}}.
  \]
  For a morphism $(f,\ang{\phi})$, define
  \begin{equation}\label{eq:FH-fphi}
  (FH)(f,\ang{\phi}) = (f,\ang{H\phi_j}_{j}).
  \end{equation}

  The multifunctoriality of $H$ shows that this assignment is functorial on morphisms.
  Since the monoidal sum is defined by concatenation in $F\M$ and $F\N$, the functor $FH$ is strict monoidal.
  Compatibility with the symmetry of $F\M$ and $F\N$ follows because $FH$ preserves the index map of each morphism and $H$ preserves unit operations.
\end{definition}

\begin{definition}\label{definition:free-perm-multinat}
  Suppose $\kappa\cn H \to K \cn \M \to \N$ is a multinatural transformation.
  Define a monoidal natural transformation
  \[
  F\kappa \cn FH \to FK
  \]
  via components
  \begin{equation}\label{eq:Fka-x}
  (F\kappa)_{\ang{x}} = (1, \ang{\ka_{x_i}}_i) \cn \ang{Hx} \to \ang{Kx} 
  \end{equation}
  for each sequence $\ang{x}$ in $F\M$.  Naturality of $F\kappa$ follows from multinaturality of $\kappa$ \eqref{multinat} because each $\si^j_{f,1}$ and $\si^j_{1,f}$ is an identity permutation and we have
  \begin{align*}
  (1,\ang{\ka_{y_j}}_j)(f,\ang{H\phi_j}_j)
    & = \bigg(f, \big\langle
      \ga\scmap{\ka_{y_j} ; H\phi_j}
    \big\rangle_{j} \bigg)\\
    & = \bigg(f, \big\langle
      \ga\scmap{K\phi_j ; \ang{\ka_{x_i}}_{i \in f^\inv(j)}}
    \big\rangle_j\bigg)\\
    & = (f,\ang{K\phi_j}_j)(1,\ang{\ka_{x_i}}_i)
  \end{align*}
  for each morphism $(f,\ang{\phi})\cn \ang{x} \to \ang{y}$ in $F\M$.

  The monoidal naturality axioms \eqref{eq:monoidal-nt} for $F\ka$ follow because the monoidal sum in $F\N$ is given by concatenation of object and operation sequences.  The component $(F\ka)_{\ang{}}$ is the identity morphism $(1_{\varnothing},\ang{})\cn \ang{} \to \ang{}$.
\end{definition}

\begin{proposition}\label{proposition:free-perm-functor}
  The free permutative category construction given in \cref{definition:free-perm,definition:free-smfun} provides a 2-functor
  \[
  F \cn \Multicat \to \PermCat.
  \]
\end{proposition}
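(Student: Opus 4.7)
The plan is to verify directly from the definitions the five 2-functoriality equations: preservation of identity 1-cells, composition of 1-cells, identity 2-cells, vertical composition of 2-cells, and horizontal composition of 2-cells. The key technical fact, already used in the proof of \cref{proposition:free-perm}, is that $\si^k_{g,f}$ equals the identity permutation whenever either $f$ or $g$ is an identity. Since every component of $F\kappa$ has identity index map by \cref{eq:Fka-x}, each equation involving 2-cells will reduce, via \cref{eq:FM-comp}, to a direct comparison of operations in the target multicategory.

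For the 1-cell axioms, observe that by \cref{eq:FH-fphi} the functor $FH$ keeps the index map fixed and applies $H$ objectwise to the operation sequence of a morphism. Consequently $F(1_{\M}) = 1_{F\M}$ and $F(GH) = FG \circ FH$ follow immediately. For the identity 2-cell $1_H$ with components $\operadunit_{Hc}$, the component $F(1_H)_{\ang{x}} = (1, \ang{\operadunit_{Hx_i}}_i)$ is the identity morphism on $\ang{Hx}$ in $F\N$. For vertical composition, the component of $\be\theta$ at $c$ is $\ga\lrscmap{\be_c;\theta_c}$, so $F(\be\theta)_{\ang{x}} = (1, \ang{\ga\lrscmap{\be_{x_i};\theta_{x_i}}}_i)$. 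Applying \cref{eq:FM-comp} to compute the vertical composite $F\be \cdot F\theta$ in $F\N$, with both index maps equal to identity and all $\si^k_{1,1}$ therefore identities, yields the same expression.

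The main bookkeeping lies in horizontal composition. For $\theta \cn F \to G$ in $\Multicat(\M,\N)$ and $\theta' \cn F' \to G'$ in $\Multicat(\N,\P)$, \cref{multinathcomp} gives $(\theta' * \theta)_c = \ga\lrscmap{\theta'_{Gc}; F'\theta_c}$, hence $F(\theta' * \theta)_{\ang{x}} = (1, \ang{\ga\lrscmap{\theta'_{Gx_i}; F'\theta_{x_i}}}_i)$. On the other side, the horizontal composite $F\theta' * F\theta$ in $\PermCat$ has component at $\ang{x}$ equal to the composition in $F\P$ of $(FF')((F\theta)_{\ang{x}}) = (1, \ang{F'\theta_{x_i}}_i)$ followed by $(F\theta')_{(FG)\ang{x}} = (1, \ang{\theta'_{Gx_i}}_i)$, using \cref{eq:FH-fphi} and \cref{eq:Fka-x} respectively. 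Applying \cref{eq:FM-comp} once more and noting that the $\si^k_{1,1}$ are identities, this composite equals $(1, \ang{\ga\lrscmap{\theta'_{Gx_i}; F'\theta_{x_i}}}_i)$, matching $F(\theta' * \theta)_{\ang{x}}$. The only subtle point is keeping the whiskering direction correct in this computation; no deeper obstacle arises.
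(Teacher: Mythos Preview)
Your proof is correct and follows essentially the same approach as the paper: both arguments rest on the observation that $F$ acts componentwise on sequences of objects, operations, and components of multinatural transformations, so that the 2-functoriality axioms reduce to elementary comparisons. The paper's proof is much terser, simply asserting this componentwise description and concluding, whereas you spell out each of the five axioms explicitly; but the underlying idea is identical. One minor presentational point: in your horizontal-composition paragraph the symbol $F$ does double duty as the free 2-functor and as the multifunctor in $\theta\cn F \to G$, which is technically consistent with the paper's notation in \cref{multinathcomp} but may confuse a reader.
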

\begin{proof}
  Verification that $F$ produces permutative categories and strict symmetric monoidal functors is given in the definitions and \cref{proposition:free-perm}.
  We see that $F$ is functorial because, for each multifunctor $H$, the symmetric monoidal functor $FH$ is given by applying $H$ componentwise to sequences of objects and operations.  Therefore $F$ preserves identity morphisms and composition.  The 2-functoriality of $F$ follows similarly because the identities, horizontal, and vertical composites of both  multinatural transformations and monoidal natural transformations are determined by their components.
\end{proof}

\begin{example}\label{example:free-empty}
  The free permutative category on the empty multicategory is the terminal permutative category.  Its single object is the empty sequence, and its single morphism is given by the identity index map and the empty sequence of operations.
\end{example}

\begin{example}\label{example:free-terminal}
  Recall from \cref{def:multicategory} that the terminal multicategory $\Mterm$ consists of a single object and a unique $n$-ary operation for each $n$.
  The free permutative category $F\Mterm$ is isomorphic to the natural number category $\N$ whose objects are given by natural numbers and morphisms are given by morphisms of finite sets
  \[
  \N(r,s) = \Set(\ufs{r},\ufs{s}).
  \]
  The natural number $r \in \N$ corresponds to the length-$r$ sequence whose terms are the unique object of $\Mterm$.  Each morphism $f\cn \ufs{r} \to \ufs{s}$ corresponds to the morphism
  \[
  (f,\ang{\phi}) \in F\Mterm
  \]
  where $\phi_j$ is the unique operation in $\Mterm$ of arity $|f^\inv(j)|$.
\end{example}

\begin{example}
  Recall from \cref{def:multicategory} that the initial operad $\Mtu$ consists of a single object and its unit operation.  Similarly to \cref{example:free-terminal}, the free permutative category $F\Mtu$ is isomorphic to the permutation category $\Si$ with objects given by natural numbers and morphisms given by permutations
  \[
  \Si(r,s) = \begin{cases}
    \Si_r, & \ifspace r = s,\\
    \varnothing, & \ifspace r \not= s
  \end{cases}
  \]
  for each pair of natural numbers $r$ and $s$.
\end{example}

\section{Free Permutative Category as a Left 2-Adjoint}\label{sec:free-perm-adj}
Recall from \cref{proposition:end-2fun} that the endomorphism multicategory $\End(\C)$ associated to a permutative category $\C$ defines a 2-functor
\[
\End\cn \PermCat \to \Multicat.
\]
Throughout this section we let $E = \End$.  The main result of this section is \cref{theorem:FE-adj}, which shows that the free construction $F$ of \cref{sec:free-perm} is left 2-adjoint to $E = \End$.

\begin{definition}[Unit]\label{definition:eta}
  Suppose $\M$ is a multicategory.
  Define a component
  \[
  \eta = \eta_{\M}\cn \M \to EF\M
  \]
  as follows.
  For an object $w \in \M$ and an operation $\phi \in \M\mmap{y;\ang{x}}$, let $(w)$ and $(\phi)$ denote the corresponding length-1 sequences.
  For each $r \ge 0$, let $\iota_r \cn \ufs{r} \to \ufs{1}$ be the unique function.
  Then $\eta = \eta_\M$ is the following assignment:
  \begin{align*}
    \eta w & = (w) \forspace w \in \M, \andspace\\
    \eta \phi & = (\iota_r, (\phi)) \cn \ang{x} \to (y)
  \end{align*}
  where $\phi \in \M\mmap{y;\ang{x}}$ and $|\ang{x}| = r$.
  Note that $\ang{x}$ is an $r$-fold concatenation of length-1 sequences $(x_i)$ and the morphism $(\iota_r, (\phi))$ in $F\M$ is an $r$-ary operation in $EF\M$.
  \Cref{lemma:eta-nat} shows that each $\eta_\M$ is multinatural and that the components are 2-natural with respect to multifunctors and multinatural transformations.
\end{definition}

\begin{lemma}\label{lemma:eta-nat}
  The components $\eta_\M$ of \cref{definition:eta} define a 2-natural transformation
  \[
  \eta\cn 1_{\Multicat} \to EF.
  \]
\end{lemma}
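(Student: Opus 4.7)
The plan is to split the statement into two verifications: (i) each $\eta_\M$ is a multifunctor $\M \to EF\M$, and (ii) the family $\{\eta_\M\}$ is 2-natural with respect to both 1-cells and 2-cells of $\Multicat$. Both amount to unwinding the definitions; only (i) involves genuine calculation.

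For (i), I would check the three axioms of \cref{def:multicategory-functor} in turn. The unit axiom is immediate: since $\iota_1 = 1_{\ufs{1}}$, the element $\eta_\M(1_w) = (1_{\ufs{1}},(1_w))$ is the identity on $(w)$ in $F\M$, which is the $(w)$-colored unit of $EF\M$. For the composition axiom, given $\psi \in \M\mmap{z;\ang{y'}}$ with $|\ang{y'}|=n$ and $\phi_j \in \M\mmap{y'_j;\ang{x_j}}$ with $|\ang{x_j}|=k_j$ summing to $k$, I would expand $\ga\big(\eta_\M\psi;\ang{\eta_\M\phi_j}_j\big)$ in $EF\M$ via \cref{ex:endc} into a two-step composition in $F\M$ and then apply \eqref{eq:FM-comp}. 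The resulting index map $\iota_n \circ (\iota_{k_1} \oplus \cdots \oplus \iota_{k_n})$ equals $\iota_k$, and each permutation $\si^k_{g,f}$ is an identity because the input profiles already concatenate in the natural order, so the single operation produced is $\ga(\psi;\ang{\phi})$. This matches $\eta_\M\big(\ga(\psi;\ang{\phi})\big) = \big(\iota_k, \ga(\psi;\ang{\phi})\big)$.

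For (ii), the comparisons are short and direct. Given a multifunctor $H \cn \M \to \N$, both $\eta_\N H$ and $EF(H)\eta_\M$ send each object $w$ of $\M$ to $(Hw)$ and each operation $\phi$ to $(\iota_r,(H\phi))$ by \eqref{eq:FH-fphi}. Given a multinatural transformation $\ka \cn H \to K$, both whiskered transformations---one sending $w$ to $\eta_\N(\ka_w)$, the other to the $(w)$-component of $F\ka$---equal $(1_{\ufs{1}},(\ka_w))$ by \eqref{eq:Fka-x}.

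I expect the symmetry axiom for $\eta_\M$ to be the main technical step. Given $\phi \in \M\mmap{y;\ang{x}}$ of arity $r$ and $\sigma \in \Si_r$, the right action in $EF\M$ is, by \cref{ex:endc}, precomposition of $\eta_\M(\phi)=(\iota_r,(\phi))$ in $F\M$ with the permutative-category symmetry morphism $\ang{x}\sigma \to \ang{x}$; on the length-1 blocks $(x_j)$ this symmetry has the form $(\sigma,\ang{1_{x_j}}_j)$. Applying the composition formula \eqref{eq:FM-comp} gives index map $\iota_r \circ \sigma = \iota_r$ and single operation $\ga(\phi;\ang{1_{x_j}})\cdot \si^1_{\iota_r,\sigma}$. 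The right unity diagram \eqref{enr-multicategory-right-unity} reduces this to $\phi \cdot \si^1_{\iota_r,\sigma}$, and the defining equation \eqref{eq:sigma-kgf} forces $\si^1_{\iota_r,\sigma} = \sigma$. The result is $(\iota_r, (\phi \cdot \sigma)) = \eta_\M(\phi \cdot \sigma)$, as required.
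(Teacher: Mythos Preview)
Your proposal is correct and follows essentially the same approach as the paper: verify the multifunctor axioms for each $\eta_\M$ by unwinding the composition formula \eqref{eq:FM-comp} in $F\M$, and then check 1- and 2-naturality directly from \eqref{eq:FH-fphi} and \eqref{eq:Fka-x}. The one difference is that you explicitly verify the symmetric group action axiom \eqref{multifunctor-equivariance}, which the paper's proof omits; your computation of $\si^1_{\iota_r,\sigma}=\sigma$ is correct and fills that gap.
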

\begin{proof}
  To check multifunctoriality of each component $\eta = \eta_{\M}$, first note that $\eta$ preserves unit operations because $\iota_{1}$ is the identity on $\ufs{1}$.  Now suppose given
  \[
  \psi \in \M\mmap{x'';\ang{x'}} \wherespace |\ang{x'}| = s, \andspace
  \]
  \[
  \phi_j \in \M\mmap{x'_j;\ang{x_j}} \wherespace |\ang{x_j}| = r_j \forspace
  j \in \ufs{s}.
  \]
  Let $r = \sum_j r_j$.
  Then the composite in $EF\M$ of
  \[
  \eta \psi = (\iota_s; (\psi)) \andspace \ang{\eta \phi_j}_{j=1}^s = \ang{(\iota_{r_j}, (\phi_j))}_j
  \]
  is given by composing the morphisms
  \begin{equation}\label{eq:eta-multinat-comp}
  (\iota_s, (\psi)) \andspace \bigoplus_{j \in \ufs{s}} (\iota_{r_j}, \phi_j) = \big(\oplus_{j \in \ufs{s}} \iota_{r_j}, \ang{\phi}\big)
  \end{equation}
  in $F\M$.
  For $g = \iota_s$ and $f = \oplus_j \iota_{r_j}$, the permutation $\sigma^1_{g,f}$ of \eqref{eq:sigma-kgf} is the identity on $\ufs{r}$.
  Therefore the composite of the morphisms \eqref{eq:eta-multinat-comp} is
  \[
  \big(\iota_s \circ (\oplus_j \iota_{r_j}), \ga\scmap{\psi;\ang{\phi}}\big)
  = \big(\iota_r, \ga\scmap{\psi;\ang{\phi}}\big) 
  = \eta\ga\scmap{\psi; \ang{\phi}}.
  \]

  Naturality of $\eta$ with respect to multifunctors $H$ follows because $FH$ is given by applying $H$ termwise to sequences of objects and operations.
  The index map of each morphism $(f,\ang{\phi})$ is preserved by $FH$ as in \cref{eq:FH-fphi}.
  Similarly, 2-naturality of $\eta$ with respect to multinatural transformations $\ka$ follows because the components of $F\ka$ have the identity index map and take sequences of the corresponding components of $\ka$ as in \cref{eq:Fka-x}.
  This completes the proof that
  \[
  \eta\cn 1_{\Multicat} \to EF
  \]
  is a 2-natural transformation.
\end{proof}

\begin{definition}[Counit]\label{definition:epz}
  Suppose $\C$ is a permutative category.
  Define a component
  \[
  \epz = \epz_\C \cn FE\C \to \C
  \]
  as follows.
  For each index map
  \[
  f\cn \ufs{r} \to \ufs{s}
  \]
  and each length-$r$ sequence of objects $\ang{x}$ with $x_i \in \C$, let
  \begin{equation}\label{eq:xi-f}
    \bigoplus_{i \in \ufs{r}} x_i \fto{\xi_f} \bigoplus_{j \in \ufs{s}} \bigoplus_{i \in f^\inv(j)} x_i
  \end{equation}
  denote the unique morphism in $\C$ given by components of $\xi$ permuting the terms of the sum.
  Uniqueness of this morphism in $\C$ follows from the Symmetric Coherence Theorem \cite[XI.1, Th.\ 1]{maclane}.
  Then $\epz$ is the following assignment:
  \begin{align}
    \epz\ang{x} & = \bigoplus_i x_i \forspace \ang{x} \in FE\C \andspace\label{eq:xi-x}\\
    \epz(f,\ang{\phi}) & = \Big(\bigoplus_j \phi_j \Big) \circ \xi_f\label{eq:xi-fphi}
  \end{align}
  where $(f,\ang{\phi})\cn \ang{x} \to \ang{y}$ in $FE\C$.
  \Cref{lemma:epz-nat} shows that each $\epz_\C$ is a strict symmetric monoidal functor and that the components are 2-natural with respect to strict symmetric monoidal functors and monoidal natural transformations.
\end{definition}

\begin{lemma}\label{lemma:epz-nat}
  The components $\epz_\C$ of \cref{definition:epz} define a 2-natural transformation
  \[
  \epz\cn FE \to 1_{\permcat}.
  \]
\end{lemma}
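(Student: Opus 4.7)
The plan is to verify, in order: (i) each $\epz_\C$ is a strict symmetric monoidal functor, and (ii) the family $\{\epz_\C\}$ is 2-natural, that is, compatible with strict symmetric monoidal functors and with monoidal natural transformations between them.

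For the functoriality half of (i), preservation of identities is immediate, since $\xi_{1_{\ufs{r}}}$ is the identity and each component operation in $\eta_{\ang{x}}$ is a colored unit $1_{x_i}$ in $E\C$. For composition, consider $(f,\ang{\phi})\cn \ang{x}\to\ang{y}$ and $(g,\ang{\psi})\cn \ang{y}\to\ang{z}$. Unpacking the definition of multicategorical composition in $E\C$ (which is just monoidal composition in $\C$), each $\theta_k$ of \eqref{eq:thetak} equals $\psi_k \circ \bigoplus_{j \in g^\inv(k)} \phi_j$. Thus $\epz_\C\big((g,\ang\psi)(f,\ang\phi)\big) = \big(\bigoplus_k \theta_k\cdot\si^k_{g,f}\big)\circ \xi_{gf}$, while $\epz_\C(g,\ang\psi)\circ\epz_\C(f,\ang\phi) = \big(\bigoplus_k\psi_k\big)\circ\xi_g\circ\big(\bigoplus_j\phi_j\big)\circ\xi_f$. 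Both composites are morphisms $\bigoplus_i x_i \to \bigoplus_k z_k$ obtained by sandwiching the $\phi_j$'s and $\psi_k$'s with coherent iterated symmetries. The Symmetric Coherence Theorem guarantees that any two morphisms in $\C$ built from $\xi$ alone between the same objects agree, and after naturality of $\xi$ slides the $\phi_j$'s past the permutations into the block determined by $(gf)^\inv(k)$, the defining equation \eqref{eq:sigma-kgf} for $\si^k_{g,f}$ identifies the two composites.

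For the symmetric monoidal half of (i): on objects, $\epz_\C(\ang x\oplus\ang{x'}) = \bigoplus(\ang x\oplus\ang{x'})_i = \big(\bigoplus_i x_i\big)\oplus\big(\bigoplus_{i'} x'_{i'}\big)$ by strict associativity of $\oplus$ in $\C$. On morphisms, the analogous identity $\xi_{f\oplus f'} = \xi_f\oplus\xi_{f'}$ again follows from Symmetric Coherence, since both sides are $\xi$-iterates between the same objects. For the symmetry axiom, $\epz_\C(\xi_{\ang x,\ang{x'}}) = \epz_\C(\tau_{r,r'},\ang 1) = \xi_{\tau_{r,r'}}$; this is a $\xi$-iterate from $\big(\bigoplus_i x_i\big)\oplus\big(\bigoplus_{i'} x'_{i'}\big)$ to $\big(\bigoplus_{i'} x'_{i'}\big)\oplus\big(\bigoplus_i x_i\big)$ and hence equals $\xi_{\epz_\C\ang x,\epz_\C\ang{x'}}$ by coherence. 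Preservation of the monoidal unit is trivial since $\epz_\C\ang{} = e$.

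For (ii), given a strict symmetric monoidal $P\cn\C\to\D$, one checks $P\epz_\C = \epz_\D FEP$. On objects this is strict preservation of $\oplus$; on morphisms it reduces to $P$ preserving each $\bigoplus_j\phi_j$ strictly and $P\xi_f = \xi_f$, the latter by the symmetry axiom \eqref{eq:sm-symm} together with Symmetric Coherence. Strictness of $P$ is essential here and is exactly why we restrict $\PermCat$ to strict functors in \cref{remark:permcat}. Given a monoidal natural transformation $\alpha\cn P\to Q$, the component $(FE\alpha)_{\ang x} = (1,\ang{\alpha_{x_i}}_i)$ from \eqref{eq:Fka-x} satisfies $\epz_\D(FE\alpha)_{\ang x} = \bigoplus_i\alpha_{x_i}$ (since $\xi_1 = 1$), which equals $\alpha_{\bigoplus_i x_i} = \alpha_{\epz_\C\ang x}$ by the monoidal naturality condition \eqref{eq:monoidal-nt}.

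The main obstacle is the composition-preservation step in (i): unpacking the permutations $\si^k_{g,f}$ and identifying them with the coherent symmetry isomorphisms of $\C$. Every other verification reduces quickly either to Symmetric Coherence or to a direct application of strict monoidality. Once the composition step is written carefully, the rest is routine.
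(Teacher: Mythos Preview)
Your proposal is correct and follows essentially the same route as the paper: verify functoriality of each $\epz_\C$ (identities are immediate; composition is handled via Symmetric Coherence, naturality of $\xi$, and the defining equation \eqref{eq:sigma-kgf} for $\si^k_{g,f}$), then check strict symmetric monoidality and finally 1- and 2-naturality using strictness of $P$ and the monoidal condition \eqref{eq:monoidal-nt} on $\alpha$. The only cosmetic slip is your reference to ``$\eta_{\ang{x}}$'' in the identity check, where you mean the identity morphism $(1_{\ufs{r}},\ang{1_{x_i}})$ of $\ang{x}$ in $FE\C$; the paper organizes the composition check as a single commuting diagram, but the ingredients are identical to yours.
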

\begin{proof}
  To check that each component $\epz = \epz_{\C}$ is a strict symmetric monoidal functor, first note that $\epz(1,\ang{1_{x_i}}_i)$ is the identity on $\oplus_i x_i$.
  Now suppose given a composable pair of morphisms in $FE\C$
  \[
  \ang{x} \fto{(f,\ang{\phi})} \ang{y} \fto{(g,\ang{\psi})} \ang{z}
  \]
  where $|\ang{x}| = r$, $|\ang{y}| = s$, and $|\ang{z}| = t$.
  The composite $(\epz g)(\epz f)$ appears along the sides and top of the following diagram, while $\epz(gf)$ appears along the bottom with $\theta_k = \ga\scmap{\psi_k; \ang{\phi}_{g^\inv(k)}}$ as in \eqref{eq:thetak},
  \[
  \begin{tikzpicture}[x=22mm,y=30mm]
    \draw[0cell] 
    (0,0) node (a) {\bigoplus_{i \in \ufs{r}} x_i}
    (a) ++(0,1) node (b) {\bigoplus_{j \in \ufs{s}} \; \bigoplus_{i \in f^\inv(j)} x_i}
    (b) ++(2,0) node (c) {\bigoplus_{j \in \ufs{s}} y_j}
    (c) ++(2,0) node (d) {\bigoplus_{k \in \ufs{t}} \; \bigoplus_{j \in g^\inv(j)} y_j}
    (d) ++(0,-1) node (e) {\bigoplus_{k \in \ufs{t}} z_k}
    (a) ++(2,0) node (f) {\bigoplus_{k \in \ufs{t}} \; \bigoplus_{i \in (gf)^\inv(k)} x_i}
    (c) ++(0,-.5) node (g) {
      \bigoplus_{k \in \ufs{t}}  \; \bigoplus_{j \in g^\inv(k)} \; \bigoplus_{i \in f^\inv(j)} x_i
    }
    ;
    \draw[1cell] 
    (a) edge node {\xi_f} (b)
    (b) edge node {\oplus_j \phi_j} (c)
    (c) edge node {\xi_g} (d)
    (d) edge node {\oplus_k \psi_k} (e)
    (a) edge['] node {\xi_{gf}} (f)
    (f) edge['] node {\oplus_k (\theta_k \cdot \si^k_{gf})} (e)
    (a) edge node {} (g)
    (b) edge node {} (g)
    (f) edge node {} (g)
    (g) edge['] node {\oplus_k \oplus_{j} \phi_j} (d)
    (g) edge node {\oplus_k \theta_k} (e)
    ;
  \end{tikzpicture}
  \]
  In the above diagram, each of the three unlabeled morphisms is the unique morphism in $\C$ given by components of $\xi$ permuting terms.
  The two triangles at left and bottom-left commute by the Symmetric Coherence Theorem.
  The triangle at bottom-right commutes by definition of $\si^k_{g,f}$ in \eqref{eq:sigma-kgf}.
  The triangle at top commutes by naturality of $\xi$ and the triangle at right commutes by definition of $\theta_k$.
  Therefore each component $\epz_\C$ is functorial.

  The monoidal sum in $FE\C$ is given by concatenation of sequences and disjoint union of index maps.
  So $\epz_\C$ is a strict monoidal functor because
  \begin{itemize}
  \item the monoidal sum in $\C$ is strictly associative and functorial,
  \item the morphisms $\xi_f$ are uniquely determined by the index maps $f$, and
  \item the empty monoidal sum in $\C$ is the monoidal unit.
  \end{itemize}
  Therefore we have
  \[
  \epz\ang{x} \oplus \epz\ang{x'} = \bigg(\bigoplus_i x_i\bigg) \oplus \bigg(\bigoplus_{i'} x'_i\bigg)
  = \epz(\ang{x} \oplus \ang{x'})
  \]
  and
  \[
  \epz(f,\ang{\phi}) \oplus \epz(f',\ang{\phi'})
  = \bigg(\big(\oplus_j \phi\big) \circ \xi_f\bigg) \oplus \bigg(\big(\oplus_{j'} \phi'\big) \circ \xi_{f'}\bigg)
  = \epz(f \oplus f' , \phi \oplus \phi')
  \]
  for objects $\ang{x}$, $\ang{x'}$ and morphisms $(f,\ang{\phi})$, $(f',\ang{\phi'})$ in $FE\C$.
  Moreover, $\epz_\C$ is a symmetric monoidal functor because the symmetry in $FE\C$ is given by block permutation of sequences and hence $\epz$ sends the symmetry of $FE\C$ to that of $\C$.
  
  Naturality of $\epz$ with respect to strict symmetric monoidal functors
  \[
  P\cn \C \to \D
  \]
  is commutativity of the following square for each such $P$.
  \begin{equation}\label{eq:epz-nat-square}
  \begin{tikzpicture}[x=20mm,y=15mm,vcenter]
    \draw[0cell] 
    (0,0) node (a) {FE\C}
    (1,0) node (b) {\C}
    (0,-1) node (c) {FE\D}
    (1,-1) node (d) {\D}
    ;
    \draw[1cell] 
    (a) edge node {\epz_\C} (b)
    (c) edge node {\epz_\D} (d)
    (a) edge['] node {FEP} (c)
    (b) edge node {P} (d)
    ;
  \end{tikzpicture}
  \end{equation}
  The above square commutes on objects $\ang{x}$ in $FE\C$ because $P$ is strict monoidal and hence
  \[
  P\left(\oplus_i x_i\right) = \oplus_i Px_i.
  \]
  Commutativity on morphisms $(f,\ang{\phi})$ depends on the following.
  \begin{itemize}
  \item By \cref{definition:free-smfun}, $FEP$ does not change the index map $f$.
  \item Since $P$ is a strict symmetric monoidal functor, we have
    \[
    P(\xi^\C_f) = \xi^\D_f
    \]
    where $\xi^\C_f$ and $\xi^\D_f$ are the unique morphisms of \eqref{eq:xi-f} induced by the symmetry of $\C$ and $\D$, respectively.
  \item Since $P$ is functorial, it preserves the composition of morphisms in the definition of $\epz$ \eqref{eq:xi-fphi}.
  \end{itemize}

  For 2-naturality of $\epz$, suppose
  \[
  \al \cn \P \to \Q \cn \C \to \D
  \]
  is a monoidal natural transformation between strict symmetric monoidal functors.
  The compatibility with monoidal constraints in \eqref{eq:monoidal-nt} implies that the components of $\al$ preserve monoidal sums.
  Therefore, using \cref{eq:Fka-x}, we have
  \[
  \epz(1,\ang{\al_{x_i}}) = \bigoplus_i \al_{x_i} = \al_{\oplus_i x_i} = \al_{\epz\ang{x}}
  \]
  for each $\ang{x} \in FE\C$.
  This completes the proof that
  \[
  \epz\cn FE \to 1_{\PermCat}
  \]
  is a 2-natural transformation.
\end{proof}
\begin{remark}\label{remark:epz-2nat-strictness}
  Note that the 2-naturality of $\epz$ depends on the assumption that $P$ is a \emph{strict} monoidal functor.
  Without that assumption, the naturality square \eqref{eq:epz-nat-square} generally does not commute on objects.
  This requirement is one of our motivations for restricting the morphisms of $\PermCat$ to be strict symmetric monoidal functors.
\end{remark}

\begin{theorem}\label{theorem:FE-adj}
  There is a 2-adjunction
  \[
  F \cn \Multicat \lradj \PermCat \cn \End
  \]
  with $F \dashv \End$.
\end{theorem}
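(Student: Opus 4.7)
The plan is to assemble the 2-adjunction from the 2-functors $F$ and $E$ of \cref{proposition:free-perm-functor,proposition:end-2fun}, the 2-natural unit $\eta\cn 1_{\Multicat} \to EF$ of \cref{lemma:eta-nat}, and the 2-natural counit $\epz\cn FE \to 1_{\PermCat}$ of \cref{lemma:epz-nat}.  What remains is to verify the two triangle identities
\[
\epz F \cdot F\eta = 1_F \andspace E\epz \cdot \eta E = 1_E
\]
as strict equalities of 2-natural transformations.  Because both sides are 2-natural by the preceding lemmas, it suffices to check these equalities componentwise, at each multicategory $\M$ and each permutative category $\C$.

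For the first triangle identity, I would evaluate at $\M$ on objects and morphisms of $F\M$.  On an object $\ang{x} = (x_1, \ldots, x_r)$, the image $F\eta_\M(\ang{x})$ is the length-$r$ sequence of length-$1$ sequences $\ang{(x_i)}_i$ in $FEF\M$, and $\epz_{F\M}$ concatenates these to recover $\ang{x}$.  On a morphism $(f, \ang{\phi})\cn \ang{x} \to \ang{y}$ with $r_j = |f^\inv(j)|$, the image $F\eta_\M(f, \ang{\phi}) = (f, \ang{(\iota_{r_j}, (\phi_j))}_j)$ is sent by $\epz_{F\M}$ to the composite
\[
\Big(\bigoplus_{j \in \ufs{s}} (\iota_{r_j}, (\phi_j))\Big) \circ \xi_f
\]
computed in $F\M$.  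The monoidal sum on the left has index map $\oplus_j \iota_{r_j}\cn \ufs{r} \to \ufs{s}$ (grouping $\ufs{r}$ consecutively) with operation sequence $\ang{\phi}$, while $\xi_f = (\si_f, \ang{1})$ reorders $\ang{x}$ to $\oplus_j \ang{x}_{f^\inv(j)}$ via the block permutation $\si_f$ furnished by symmetric coherence in $F\M$.  Applying the composition formula \eqref{eq:FM-comp}, the resulting index map is $(\oplus_j \iota_{r_j}) \circ \si_f = f$; the permutations $\si^k$ arising from \eqref{eq:sigma-kgf} are identities because each fibre of $\si_f$ is a singleton listed in the natural order of $f^\inv(k)$; and each composite operation reduces to $\phi_k$ by the right unit axiom of $\M$.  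Hence the composite equals $(f, \ang{\phi})$.

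For the second triangle identity at $\C$, on an object $c \in E\C$ we have $\eta_{E\C}(c) = (c)$ and $E\epz_\C((c)) = \epz_\C((c)) = c$.  On an operation $\phi \in E\C\mmap{y; \ang{x}}$, which is a morphism $\oplus_i x_i \to y$ in $\C$, we have $\eta_{E\C}(\phi) = (\iota_r, (\phi))$ and
\[
E\epz_\C(\iota_r, (\phi)) = \epz_\C(\iota_r, (\phi)) = \phi \circ \xi_{\iota_r} = \phi,
\]
since $\xi_{\iota_r}\cn \oplus_i x_i \to \oplus_i x_i$ is the unique such symmetry morphism, hence the identity by the Symmetric Coherence Theorem.

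The main obstacle to this theorem has really been absorbed by the preceding section, namely the definitions of $\eta$ and $\epz$ and the verifications of their 2-naturality.  The triangle identities themselves are a direct bookkeeping exercise; the first is the more delicate of the two, requiring careful tracking via the composition formula \eqref{eq:FM-comp} and the observation that the permutations $\si^k_{g,f}$ in \eqref{eq:sigma-kgf} are trivial whenever $f$ is a permutation with singleton fibres.  Once that is unpacked, the remaining simplifications are that $\iota_1$ is the identity on $\ufs{1}$, that single-term monoidal sums are identities, and that the symmetries $\xi_{\iota_r}$ are identities by symmetric coherence.
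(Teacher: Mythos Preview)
Your proof is correct and follows essentially the same approach as the paper: verify the two triangle identities componentwise by unwinding $F\eta$, $\epz_{F}$, $\eta_{E}$, and $E\epz$ on objects and morphisms/operations, with the key simplifications being that the relevant $\si^k$ are identities and that each $\xi_{\iota_r}$ is an identity. One small caveat on your closing summary: $\si^k_{g,f}$ is not trivial for an \emph{arbitrary} permutation $f$ (take $g=\iota_2$ and $f$ the transposition on $\ufs{2}$), only for the specific $\si_f$ here, whose inverse is order-preserving on each block $g^{-1}(k)$---but your main argument already states and uses this correctly.
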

\begin{proof}
  The unit and counit
  \[
  \eta\cn 1_{\Multicat} \to EF
  \andspace
  \epz\cn FE \to 1_{\PermCat}
  \]
  are shown to be 2-natural transformations in \cref{lemma:eta-nat,lemma:epz-nat}.
  For each small multicategory $\M$, the composite
  \begin{equation}\label{eq:triang-FEF}
  F\M \fto{F \eta_\M} FEF\M \fto{\epz_{F\M}} F\M
  \end{equation}
  is the identity on objects because each sequence $\ang{x}$ is equal to the concatenation of length-1 sequences of its entries.
  For a morphism $(f,\ang{\phi})$ in $F\M$, 
  \[
  (\epz_{F\M}) (F\eta_\M) (f, \ang{\phi})
  = \epz_{F\M} \left(f, \ang{ ( \iota_{|f^{\inv}(j)|}, \phi_j ) }_{j \in \ufs{s}} \right)
  \]
  is the composite of the two morphisms in the following diagram.
  \[
    \begin{tikzpicture}[x=55mm,y=20mm]
      \draw[0cell] 
      (0,0) node (a) {\ang{x}}
      (1,0) node (b) {\ang{y}}
      (.5,-.5) node (c) {\ang{\ang{x_i}_{i \in f^\inv(j)}}_{j \in \ufs{s}}}
      ;
      \draw[1cell] 
      (a) edge node {
        \epz_{F\M} \left(
          f, \ang{ ( \iota_{|f^{\inv}(j)|}, \phi_j ) }_{j \in \ufs{s}}
        \right)
      } (b)
      (a) edge['] node {(\xi_f, \ang{1})} (c)
      (c) edge['] node {\big(
        \bigoplus_{j \in \ufs{s}} \iota_{|f^{\inv}(j)|}, \ang{\phi_j}_{j \in \ufs{s}}
        \big)} (b)
      ;
    \end{tikzpicture}
  \]
  The index map for this composite is
  \[
    \ufs{r} \fto{\xi_f} \ufs{r} \fto{\oplus_j\; \iota_{|f^\inv(j)|}} \ufs{s},
  \] 
  which is equal to $f$.  The second component of the composite is $\ang{\phi}$ because, for each $k \in \ufs{s}$, the permutation $\sigma^k_{\oplus_j\, \iota_{|f^\inv(j)|} \,,\, \xi_f}$ of \eqref{eq:sigma-kgf} is the identity.
  
  A similar checking of objects and morphisms shows that the composite
  \[
  E\C \fto{\eta_{E\C}} EFE\C \fto{E \epz_\C} E\C
  \]
  is the identity for each small permutative category $\C$
  because each $\xi_{\iota_r}$ is an identity morphism.  This completes the proof that $F \dashv E$ is a 2-adjunction.
\end{proof}

We close this section with one further construction: a componentwise right-adjoint for $\epz$.  This will be used in the proof of \cref{theorem:F-End-hthy-equiv} to show that $\epz$ is a componentwise stable equivalence.
\begin{proposition}\label{proposition:epz-rho-adj}
  For each permutative category $\C$ there is an adjunction of categories
  \[
  \epz_\C\cn FE\C \lradj \C\cn \rho_\C
  \]
  where $\rho_\C$ is induced by inclusion of length-1 tuples and $\epz_\C \dashv \rho_C$.
\end{proposition}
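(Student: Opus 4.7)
The plan is to take the counit of the adjunction to be the identity, which makes $\rho_\C$ a fully faithful right adjoint section of $\epz_\C$. First I would define $\rho_\C$ on objects by sending $c \in \C$ to the length-1 sequence $(c)$, and on a morphism $\phi\cn c \to c'$ in $\C$ by sending it to the morphism $(1_{\ufs{1}}, (\phi))\cn (c) \to (c')$ in $FE\C$, using the identification $E\C\mmap{c'; (c)} = \C(c,c')$. Functoriality is immediate from the definition of composition in $FE\C$ since all permutations $\si^k_{g,f}$ are trivial when both index maps are identities, and composition then reduces to composition in $\C$.

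A direct computation from \eqref{eq:xi-x} and \eqref{eq:xi-fphi} shows $\epz_\C \rho_\C = 1_\C$: on a morphism $\phi$ we get $\phi \circ \xi_{1_{\ufs{1}}}$, and $\xi_{1_{\ufs{1}}}$ is the identity by the Symmetric Coherence Theorem. So I would declare the counit to be the identity natural transformation $1 \cn \epz_\C \rho_\C \to 1_\C$. For the unit $\eta \cn 1_{FE\C} \to \rho_\C \epz_\C$, I would define
\[
\eta_{\ang{x}} = \big(\iota_r, (1_{\oplus_i x_i})\big) \cn \ang{x} \to (\oplus_i x_i) = \rho_\C\epz_\C\ang{x},
\]
where $\iota_r \cn \ufs{r} \to \ufs{1}$ is the unique function and the single operation in $E\C\mmap{\oplus_i x_i; \ang{x}} = \C(\oplus_i x_i, \oplus_i x_i)$ is the identity.

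The verifications to carry out are then naturality of $\eta$ and the two triangle identities. The triangle identities are straightforward: since the counit is the identity, one triangle reduces to showing $\epz_\C(\eta_{\ang{x}}) = 1_{\oplus_i x_i}$, which follows because $\xi_{\iota_r}$ is the identity on $\oplus_i x_i$ by Symmetric Coherence; the other reduces to showing $\eta_{(c)} = 1_{(c)}$, which is immediate since $\iota_1 = 1_{\ufs{1}}$.

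The main obstacle will be naturality of $\eta$ along a morphism $(f, \ang{\phi})\cn \ang{x} \to \ang{y}$ in $FE\C$. On both sides the resulting index map collapses via $\iota$ to $\iota_r \cn \ufs{r} \to \ufs{1}$, so the content is in comparing the single operations. The composite $\eta_{\ang{y}} \circ (f, \ang{\phi})$ produces the operation $(\oplus_j \phi_j) \cdot \si^1_{\iota_s, f}$, while $\rho_\C\epz_\C(f, \ang{\phi}) \circ \eta_{\ang{x}}$ produces $(\oplus_j \phi_j) \circ \xi_f$. These agree because the symmetric group action of $\si^1_{\iota_s, f} \in \Si_r$ on the operation $\oplus_j \phi_j \in E\C\mmap{\oplus_j y_j;\, \oplus_j \ang{x}_{f^\inv(j)}}$ is precisely precomposition with the unique symmetry-coherent isomorphism $\oplus_i x_i \to \oplus_j \oplus_{i \in f^\inv(j)} x_i$, which is $\xi_f$ by the Symmetric Coherence Theorem. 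Thus the two sides coincide and $\eta$ is natural, completing the adjunction $\epz_\C \dashv \rho_\C$.
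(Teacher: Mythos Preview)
Your proposal is correct and follows essentially the same approach as the paper: you define $\rho_\C$ identically, take the counit to be the identity, define the unit via $(\iota_r, 1_{\oplus_i x_i})$, and verify naturality by the same key observation that the right action of $\si^1_{\iota_s,f}$ on $\oplus_j \phi_j$ coincides with precomposition by $\xi_f$. The triangle-identity checks are also the same.
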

\begin{proof}
  Throughout this proof we write $\epz = \epz_\C$ and $\rho = \rho_\C$.
  The functor $\rho$ is defined by the assignments
  \begin{align*}
    \rho x & = (x) \forspace x \in \C \andspace\\
    \rho \phi & = (1_{\ufs{1}},(\phi))
  \end{align*}
  for morphisms $\phi$ in $\C$.
  In this definition both $(x)$ and $(\phi)$ denote length-1 tuples.
  The composite
  \[
  \C \fto{\rho} FE\C \fto{\epz} \C
  \]
  is the identity.
  The composite
  \[
  FE\C \fto{\epz} \C \fto{\rho} FE\C
  \]
  is given by the assignments
  \begin{align*}
    \ang{x} & \mapsto \big(\oplus_i x_i\,\big)\\
    (f,\ang{\phi}) & \mapsto \big(1_{\ufs{1}}\,,\,\big(\oplus_j \phi_j\big)\circ \xi_f\,\big)
  \end{align*}
  where $\ang{x}$ and $(f,\ang{\phi})$ are objects and morphisms in $FE\C$.
  A unit for the adjunction $(\epz,\rho)$ is provided by the following.
  Define a natural transformation $\al\cn 1_{FE\C} \to \rho \epz$ with components
  \begin{equation}\label{eq:al-x}
  \al_{\ang{x}} = \big( \iota_r, 1_{\oplus_i x_i} \big) \cn \ang{x} \to \big(\oplus_i x_i\, \big)
  \end{equation}
  for a length-$r$ sequence in $\ang{x}$ in $FE\C$.
  In \eqref{eq:al-x}, $\iota_r\cn \ufs{r} \to \ufs{1}$ denotes the unique morphism.
  For a morphism
  \[
    (f,\ang{\phi})\cn \ang{x} \to \ang{y} \in FE\C,
  \]
  where $\ang{x}$ has length $r$ and $\ang{y}$ has length $s$, the naturality diagram for $\al$ is the following.
  \begin{equation}\label{eq:nat-al}
    \begin{tikzpicture}[x=50mm,y=15mm,vcenter]
      \draw[0cell] 
      (0,0) node (a) {\ang{x}}
      (1,0) node (b) {\ang{y}}
      (0,-1) node (c) {\big(\oplus_{i} x_i\big)}
      (1,-1) node (d) {\big(\oplus_{j} y_j\big)}
      ;
      \draw[1cell] 
      (a) edge node {(f, \ang{\phi})} (b)
      (c) edge node {\big(1_{\ufs{1}} \,,\, \big(\oplus_j \phi_j\big) \circ \xi_f\big)} (d)
      (a) edge['] node {\big( \iota_r, 1_{\oplus_i x_i} \big)} (c)
      (b) edge node {\big( \iota_s, 1_{\oplus_j y_j} \big)} (d)
      ;
    \end{tikzpicture}
  \end{equation}
  In the above diagram, the first component in each composite is $\iota_r$.  The second component in each composite is $(\oplus_j \phi_j) \circ \xi_f$ because the right action of $\si^1_{\iota_s,f}$ on $\oplus_j \phi_j$ is equal to the composition with $\xi_f$.

  The triangle identities for unit $\al$ and counit $1_{1_\C}$ hold as follows.
  First, for an object $x$ of $\C$, the component $\al_{(x)}$ is the identity on $(x)$.
  Second, for an object $\ang{x}$ of $FE\C$, the morphism $\epz(\al_{\ang{x}})$ is the identity on $\oplus_i x_i$.
  This finishes the proof that $(\epz,\rho) = (\epz_\C,\rho_\C)$ is an adjunction of categories.
\end{proof}
\begin{remark}
  Note that the components $\rho_\C$ in the proof of \cref{proposition:epz-rho-adj} are not strictly monoidal, because the monoidal sum in $FE\C$ is given by concatenation.
  For objects $x$ and $x'$ in $\C$, there is a monoidal constraint morphism
  \[
  (\iota_{2}, 1_{x \oplus x'})\cn (x,x') \to (x \oplus x').
  \]
  There is also a unit constraint morphism
  \[
  (\iota_0,1_e)\cn \ang{} \to (e).
  \]
  One can verify that these satisfy associativity, unit, and symmetry axioms to make $\rho_\C$ a symmetric monoidal functor.
\end{remark}

\section{Multicategories Model All Connective Spectra}\label{sec:multicats-model} 

For permutative categories and for multicategories, we define stable equivalences via the stable equivalences on $K$-theory spectra.  We let $\SymSp$ denote the Hovey-Shipley-Smith category of symmetric spectra \cite{hss}.  We let
\[
K \cn \PermCat \to \SymSp
\]
denote Segal's $K$-theory functor \cite{segal} that constructs a connective symmetric spectrum from each small permutative category.
See \cite[Chapters~7 and~8]{cerberusIII} for a review and further references.
For the work below we will need the following two facts about stable equivalences.
\begin{remark}[Stable Equivalences]\label{remark:steq}
  \ 
\begin{enumerate}
\item There is a Quillen model structure on $\SymSp$ whose weak equivalences are the stable equivalences \cite[3.4.4 and~5.3.8]{hss}.
  In particular, the class of stable equivalences includes isomorphisms, is closed under composition, and has the 2-out-of-3 property.
\item\label{it:steq-2} If $P\cn \C \to \D$ is a strict symmetric monoidal functor whose underlying functor is a left or right adjoint, then $KP$ is a stable equivalence \cite[7.2.5 and~7.8.8]{cerberusIII}.\dqed
\end{enumerate}
\end{remark}

\begin{definition}\label{definition:she}
  A \emph{stable equivalence} between permutative categories is a strict symmetric monoidal functor $P$ such that $KP$ is a stable equivalence of $K$-theory spectra.
  A \emph{stable equivalence} between multicategories is a multifunctor $H$ such that $FH$ is a stable equivalence of permutative categories.
  Equivalently, the stable equivalences of permutative categories are reflected by $K$ and the stable equivalences of multicategories are reflected by $F$.
\end{definition}

\begin{theorem}\label{theorem:F-End-hthy-equiv}
  The free functor $F$ and the endomorphism functor $\End$ induce
  equivalences of homotopy theories
  \[
  F\cn \big(\Multicat,\cS\big) \lrsimadj \big(\PermCat,\cS\big) \cn \End
  \]
  where $\cS$ denotes the class of stable equivalences in each category.
\end{theorem}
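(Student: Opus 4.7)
The plan is to apply \cref{gjo29} to the 2-adjunction $F \dashv \End$ established in \cref{theorem:FE-adj}. This requires verifying three things: (a) both $F$ and $\End$ are relative functors with respect to $\cS$, (b) each component $\epz_\C$ of the counit is a stable equivalence in $\PermCat$, and (c) each component $\eta_\M$ of the unit is a stable equivalence in $\Multicat$. The order in which I would carry this out is (b), then (c), then (a), and then assemble via \cref{gjo29}.

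First I would address (b). The functor $\epz_\C\cn FE\C \to \C$ is strict symmetric monoidal by \cref{lemma:epz-nat}, and by \cref{proposition:epz-rho-adj} its underlying functor is a left adjoint, with right adjoint $\rho_\C$. By \cref{remark:steq}\eqref{it:steq-2}, $K\epz_\C$ is therefore a stable equivalence of symmetric spectra, so $\epz_\C$ is a stable equivalence of permutative categories by \cref{definition:she}. Next, for (c), I would use the triangle identity from \cref{theorem:FE-adj}, namely
\[
\epz_{F\M} \circ F\eta_\M = 1_{F\M},
\]
as already verified in the proof of \cref{theorem:FE-adj} on the level of objects and morphisms. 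Applying $K$ turns this into an equality of morphisms in $\SymSp$, and since $K\epz_{F\M}$ and the identity $1_{KF\M}$ are both stable equivalences, the 2-out-of-3 property from \cref{remark:steq} forces $KF\eta_\M$ to be a stable equivalence. By \cref{definition:she} this means $F\eta_\M$ is a stable equivalence of permutative categories and hence $\eta_\M$ is a stable equivalence of multicategories.

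For (a), $F$ preserves stable equivalences tautologically from \cref{definition:she}. For $\End$, suppose $P\cn \C \to \D$ is a stable equivalence in $\PermCat$. The 2-naturality square for $\epz$ from \eqref{eq:epz-nat-square} gives
\[
P \circ \epz_\C = \epz_\D \circ FEP
\]
in $\PermCat$, and after applying $K$ the three morphisms $KP$, $K\epz_\C$, $K\epz_\D$ are all stable equivalences by (b) and hypothesis. The 2-out-of-3 property then forces $KFEP$ to be a stable equivalence, so $FEP$ is a stable equivalence in $\PermCat$ and thus $\End(P) = EP$ is a stable equivalence in $\Multicat$ by \cref{definition:she}.

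Finally I would invoke \cref{gjo29}: we have a relative adjunction $F \cn (\Multicat,\cS) \rightleftarrows (\PermCat,\cS) \cn \End$ whose unit and counit components all lie in $\cS$, so both $F$ and $\End$ are equivalences of homotopy theories. The main subtlety, and really the only non-formal input, is step (b): the hypothesis of \cref{remark:steq}\eqref{it:steq-2} needs $\epz_\C$ itself to be strict symmetric monoidal (which it is) while only the underlying functor is adjoint to something (namely $\rho_\C$, which is \emph{not} strict monoidal); this asymmetry was precisely flagged in \cref{remark:epz-2nat-strictness} as the reason for restricting $\PermCat$ to strict morphisms, and is what makes the whole argument fit together.
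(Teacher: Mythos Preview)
Your proposal is correct and follows essentially the same route as the paper: both arguments use \cref{proposition:epz-rho-adj} with \cref{remark:steq}\eqref{it:steq-2} to see that each $\epz_\C$ is a stable equivalence, deduce from naturality of $\epz$ and 2-out-of-3 that $\End$ is a relative functor, use the triangle identity and 2-out-of-3 for the components of $\eta$, and conclude via \cref{gjo29}. Your closing remark slightly misattributes \cref{remark:epz-2nat-strictness}, which concerns the 2-naturality of $\epz$ with respect to strict $P$ rather than the non-strictness of $\rho_\C$, but this does not affect the argument.
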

\begin{proof}
  By definition of stable equivalences, $F$ is a relative functor.
  To see that $\End$ is a relative functor, we first note that the components of $\epz$ are stable equivalences by
  \Cref{proposition:epz-rho-adj} and \cref{remark:steq}~(\cref{it:steq-2}).
  Naturality of $\epz$ and the 2-out-of-3 property for stable equivalences then imply that
  \[
  F\End(P)\cn F\End(\C) \to F\End(\D)
  \]
  is a stable equivalence whenever $P$ is a stable equivalence.
  This, in turn, implies that $\End(P)$ is a stable equivalence.
  Hence $\End$ is a relative functor.

  The triangle identities for $\eta$ and $\epz$, together with the 2-out-of-3 property, imply that the components of $\eta$ are also stable equivalences.  Then the result follows from \cref{gjo29}.
\end{proof}

\bibliographystyle{sty/amsalpha3}
\bibliography{references}
\end{document}